\documentclass[11pt]{amsart}
\usepackage[mathscr]{eucal}
\usepackage[dvipdfm]{graphicx}
\usepackage{amssymb, amscd, color}

 \pdfoutput=1
\theoremstyle{plain}
\newtheorem{theorem}{Theorem}[section]
\newtheorem{corollary}{Corollary}[section]
\newtheorem{lemma}{Lemma}[section]
\newtheorem{proposition}{Proposition}[section]

\newtheorem{definition}{Definition}[section]

\theoremstyle{remark}
  \newtheorem{remark}{Remark}[section]
  \newtheorem{example}{Example}[section]

\numberwithin{equation}{section}
\def\Vec#1{\mbox{\boldmath $#1$}}
\newcommand{\C}{\mathbb C}
\newcommand{\diag}{\operatorname{diag}}

\newcommand{\bbar}{\left(  \begin{array}}
\newcommand{\ebar}{\end{array} \right)}
\newcommand{\bdm}{\begin{displaymath}}
\newcommand{\edm}{\end{displaymath}}
\newcommand{\beq}{\begin{equation}}
\newcommand{\beqa}{\begin{eqnarray}}
\newcommand{\beqas}{\begin{eqnarray*}}
\newcommand{\eeq}{\end{equation}}
\newcommand{\eeqa}{\end{eqnarray}}
\newcommand{\eeqas}{\end{eqnarray*}}
\newcommand{\dd}{\textup{d}}
\newcommand{\real}{\mathbb R}
\newcommand{\SSS}{{\mathbb S}}
\newcommand{\HHH}{{\mathbb H}}
\newcommand{\Ad}{\textup{Ad}}

\begin{document}
\title[CGC surfaces in $\mathbb{S}^3$]
{Constant Gaussian curvature surfaces in the 3-sphere via loop groups}
\author[D.~Brander]{David Brander}
\address{
Institut for Matematik og Computer Science,
Matematiktorvet, bygning 303B,
Technical University of Denmark,
DK-2800 Kgs. Lyngby, Denmark} 
\email{D.Brander@mat.dtu.dk}
\thanks{The first named author is partially supported by 
FNU grant \emph{Symmetry Techniques in Differential Geometry}}
\author[J.~Inoguchi]{Jun-ichi Inoguchi}
\address{Department of Mathematical Sciences, 
Faculty of Science,
Yamagata University, 
Yamagata, 990--8560, Japan}
\email{inoguchi@sci.kj.yamagata-u.ac.jp}
\thanks{The second named author is partially supported by Kakenhi 
21546067, 24540063}
\author[S.-P.~Kobayashi]{Shimpei Kobayashi}
\address{
Graduate School of Science and Technology,
Hirosaki University,
Hirosaki, 036-8561, Japan}
\email{shimpei@cc.hirosaki-u.ac.jp}
\thanks{The third named author is partially supported by Kakenhi 
23740042}
\subjclass[2010]{Primary~53A10, Secondary~53C42, 53C43}
\keywords{Constant curvature; 3-sphere; generalized 
 Weierstra{\ss} representation; nonlinear d'Alembert formula}
\date{\today}

\begin{abstract}
In this paper we study constant positive  Gauss curvature $K$ surfaces in the 
$3$-sphere $\mathbb{S}^3$ with $0<K<1$ as well as constant negative 
curvature surfaces. We show that the so-called \emph{normal} Gauss map
for a surface in $\mathbb{S}^3$ with Gauss curvature $K<1$ is Lorentz harmonic
with respect to the metric induced by the second fundamental form 
if and only if $K$ is constant. 
 We give a uniform loop group formulation for all such
surfaces with $K\neq 0$, and use the generalized d'Alembert method to construct examples.
This representation gives a natural correspondence between such surfaces with $K<0$
and those with $0<K<1$. 
\end{abstract}

\maketitle

\section*{Introduction}

The study of isometric immersions from space 
forms into space forms is a classical and important 
problem of differential geometry. 
This subject has its origin in realizability of the hyperbolic plane geometry in 
Euclidean 3-space $\mathbb{E}^3$. As is well known, 
Hilbert proved the nonexistence of isometric immersions of the
hyperbolic plane into $\mathbb{E}^3$ \cite{Hilbert}.
Analogous results hold for surfaces in the $3$-sphere 
$\mathbb{S}^3$ and hyperbolic $3$-space $\HHH^3$ as follows:
\begin{theorem}[\cite{Spivak}]
There is no complete surface 
in $\mathbb{S}^3$ or $\mathbb{H}^3$ with constant negative curvature $K< 0$ and 
constant negative extrinsic curvature.
\end{theorem}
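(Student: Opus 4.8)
The plan is to reduce the statement to the classical Hilbert-type argument, handling the ambient cases $\mathbb{S}^3$ and $\mathbb{H}^3$ uniformly by writing the ambient sectional curvature as $c\in\{1,-1\}$. Let $M$ be such a surface, with constant intrinsic curvature $K<0$ and constant extrinsic (Gauss--Kronecker) curvature $K_{e}$. The Gauss equation in a space form of curvature $c$ reads $K=c+K_{e}$, so $K_{e}=K-c$ is a negative constant under the hypotheses (here $K<0$ already forces $K_{e}=K-1<0$ in $\mathbb{S}^3$, while in $\mathbb{H}^3$ the separate hypothesis $K_{e}<0$ is genuinely needed). Since $K_{e}=\kappa_{1}\kappa_{2}<0$, the two principal curvatures have opposite signs everywhere; in particular $M$ has no umbilics and carries two transverse, globally defined asymptotic line fields. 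First I would pass to the universal cover $\widetilde M$: it is complete and of constant curvature $K<0$, hence isometric to the hyperbolic plane $\mathbb{H}^2(K)$, and so has infinite area; all of the local data below pull back to $\widetilde M$.

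Next I would introduce asymptotic coordinates $(u,v)$, so that the second fundamental form has only a mixed term. Since the Codazzi equations for a surface in a space form have the same form as in $\mathbb{E}^3$ (the normal component of the ambient curvature tensor vanishes), the constancy of $K_{e}$ forces the asymptotic lines to form a Chebyshev net; reparametrizing each family by arc length one obtains
\begin{equation*}
I=\dd u^{2}+2\cos\omega\,\dd u\,\dd v+\dd v^{2},
\end{equation*}
where $\omega\in(0,\pi)$ is the angle between the two asymptotic directions, which never degenerates precisely because $K_{e}$ is a nonzero constant. The Brioschi/Gauss formula for this metric gives $K=-\omega_{uv}/\sin\omega$, and since $K$ is constant this is the sine--Gordon equation $\omega_{uv}=|K|\sin\omega$. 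The intrinsic area element is $\sin\omega\,\dd u\,\dd v$, so for any coordinate rectangle $Q=[u_{1},u_{2}]\times[v_{1},v_{2}]$,
\begin{equation*}
\operatorname{Area}(Q)=\iint_{Q}\sin\omega\,\dd u\,\dd v
=\frac{1}{|K|}\bigl(\omega(u_{2},v_{2})-\omega(u_{1},v_{2})-\omega(u_{2},v_{1})+\omega(u_{1},v_{1})\bigr)<\frac{2\pi}{|K|},
\end{equation*}
using $0<\omega<\pi$ at each corner. Thus every asymptotic quadrilateral has area bounded by the universal constant $2\pi/|K|$.

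Finally I would run the global argument. Completeness of $\widetilde M$ makes the asymptotic curves extend to all parameter values, so the Chebyshev parametrization defines a map $\Phi:\mathbb{R}^{2}\to\widetilde M$ which is a local diffeomorphism (as $\sin\omega\neq0$). One then shows, by a path-lifting/monodromy argument exactly as in Hilbert's theorem, that $\Phi$ is a covering map; since $\widetilde M$ is simply connected, $\Phi$ is a global diffeomorphism. Exhausting $\mathbb{R}^{2}$ by rectangles and letting them grow, the bound above yields $\operatorname{Area}(\widetilde M)\le 2\pi/|K|<\infty$, contradicting the infinite area of $\mathbb{H}^2(K)$. This contradiction shows that no such complete surface exists.

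The main obstacle is the last step: promoting the everywhere-defined asymptotic net to global Chebyshev coordinates on $\widetilde M$, that is, proving that $\Phi$ is onto. This is the delicate completeness argument at the heart of Hilbert's theorem; deriving the Chebyshev-net normalization from the Codazzi equations is the other point requiring care, though it becomes routine once the structure equations are written in asymptotic parameters.
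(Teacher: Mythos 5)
The paper offers no proof of this statement; it is quoted from Spivak (Vol.~IV) purely as motivation, so there is nothing in-paper to compare against, and your proposal should be judged on its own. What you write is, in substance, the standard argument --- Hilbert's theorem transplanted to space forms --- and it is correct in strategy and consistent with the machinery the paper develops later: the hypothesis of constant negative extrinsic curvature is exactly what gives a globally defined, everywhere transverse asymptotic net; the Codazzi equations in asymptotic coordinates (the paper's (\ref{codazzieqns}), whose form is unchanged in any space form because the ambient curvature tensor of a space form is tangential) yield $A_v=B_u=0$ and hence the Chebyshev normalization; and the Gauss equation then collapses to $\omega_{uv}=|K|\sin\omega$ with area element $\sin\omega\,\mathrm{d}u\,\mathrm{d}v$, which is the source of the $2\pi/|K|$ bound on asymptotic quadrilaterals. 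Your bookkeeping of when the extrinsic hypothesis is automatic ($\mathbb{S}^3$, where $K_e=K-1<0$ follows from $K<0$) versus essential ($\mathbb{H}^3$, where $K_e=K+1$ and the complete equidistant surfaces with $-1<K<0$ show it cannot be dropped) is also right. The only place where the sketch is not yet a proof is the step you flag yourself: showing that the asymptotic flows on $\widetilde M$ are complete and that the resulting Chebyshev map $\Phi:\real^2\to\widetilde M$ is a diffeomorphism onto $\widetilde M$. That is genuinely the delicate core of Hilbert's argument, but it uses only completeness, the absence of umbilics, and simple connectivity, so it carries over verbatim from the Euclidean case; with it supplied, the comparison of the uniform area bound with the infinite area of $\mathbb{H}^2(K)$ closes the argument. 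I see no step that would fail.
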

Due to the complicated structure (nonlinearity) of the integrability 
condition (Gauss-Codazzi-Ricci equations) of isometric immersions 
between space forms,  
in the past decades many results on 
non-existence, rather than the construction 
of explicit examples, have been obtained. For this direction we refer the reader to 
a survey article \cite{Borishenko}.

Another reason for the focus on non-existence may be the presence of singularities.
Surfaces in $\mathbb{S}^3$ with constant Gauss curvature $K<1$ always have 
singularities, excepting the flat case $K=0$ (in fact there exist infinitely many 
flat tori in $\mathbb{S}^3$ \cite{Kitagawa}).
Recently, however, there has been some movement to broaden the class of surfaces to include
those with singularities, and a number of interesting studies 
of the geometry of these:  see \textit{e.g.} \cite{SUY}.

On the other hand, one can see that under the asymptotic Chebyshev net parametrization,
the Gauss-Codazzi equation of surfaces in $\mathbb{S}^3$ with constant curvature 
$K<1$ ($K\not=0$) are reduced to the sine-Gordon equation. 
The sine-Gordon equation also arises as the Gauss-Codazzi equation of pseudospherical surfaces
in $\mathbb{E}^3$ (surfaces of constant negative curvature) and is associated to harmonic maps
from a Lorentz surface into the $2$-sphere.

By virtue of loop group techniques an infinite dimensional d'Alembert type 
representation for solutions is available for surfaces associated to Lorentz harmonic
maps. More precisely, 
all solutions are given in terms of two functions, each of one variable only. 
This type of construction method can be traced back to a work by 
Krichever \cite{Krichever}. An example of an application of this method is the
solution, in \cite{BS1}, of the geometric Cauchy problem for pseudospherical surfaces in $\mathbb{E}^3$
as well as for timelike constant mean curvature (CMC) surfaces in Lorentz-Minkowski 3-space $\mathbb{L}^3$.  The key ingredient 
is the generalized d'Alembert 
representation for Lorentz harmonic maps of Lorentz surfaces into semi-Riemannian symmetric spaces. See also \cite{D} and the references therein for more examples.  One can expect that the 
approach can be adapted to other classes of isometric immersion problems.

These observations motivate us to establish 
a loop group method (generalized d'Alembert formula) for surfaces in $\mathbb{S}^3$ of 
constant curvature $K<1$.
We shall in fact give such a solution that covers all such surfaces with $K\neq0$.
The key point is to discover \emph{which Gauss map} (there are several definitions for 
surfaces in $\SSS^3$) is the right one to make the connection with harmonic maps.

\subsection{Outline of this article}
This paper is organized as follows. 
After prerequisite knowledge in Sections 1--2, we will give a loop group formulation 
for surfaces in $\mathbb{S}^3$ of constant curvature $K<1$, ($K\not=0$) 
in Section 3. In particular we will show that the Lorentz harmonicity (with respect to 
the conformal structure determined by the second fundamental form) of the \emph{normal Gauss map}
of a surface with curvature $K<1$ is equivalent to the constancy of $K$. 
The normal Gauss map is the left translation, to the Lie algebra $\mathfrak{su}(2)$, of
the unit normal $n$ to the immersion $f$ into $\SSS^3 = SU(2)$: namely $\nu = f^{-1}n$.

The harmonicity of the normal Gauss map enables 
us to construct constant curvature surfaces in terms of  
Lorentz harmonic maps. We  establish a loop group theoretic d'Alembert representation for 
surfaces in $\mathbb{S}^3$ with constant curvature  $K<1$, ($K\not=0$). 
In Section \ref{limitingsection} we  give 
a relation between the surfaces in $\mathbb{S}^3$ and pseudospherical surfaces in $\mathbb{E}^3$,
and show how the well known Sym formula for the latter surfaces arises naturally from our
construction. 
Finally, we  give a detailed analysis of the limiting procedure $K\to 0$. 
The paper ends with some explicit examples constructed by our method. 

\begin{figure}[ht]
\centering
$
\begin{array}{ccc}
\includegraphics[height=45mm]{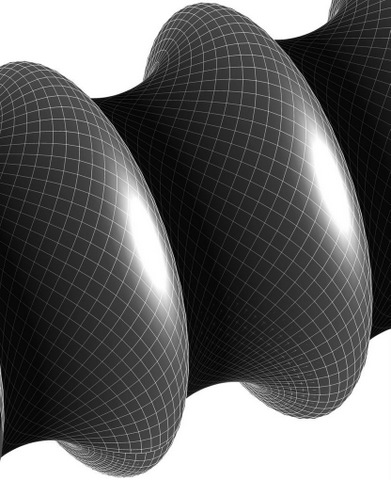} \quad  &  \quad 
\includegraphics[height=45mm]{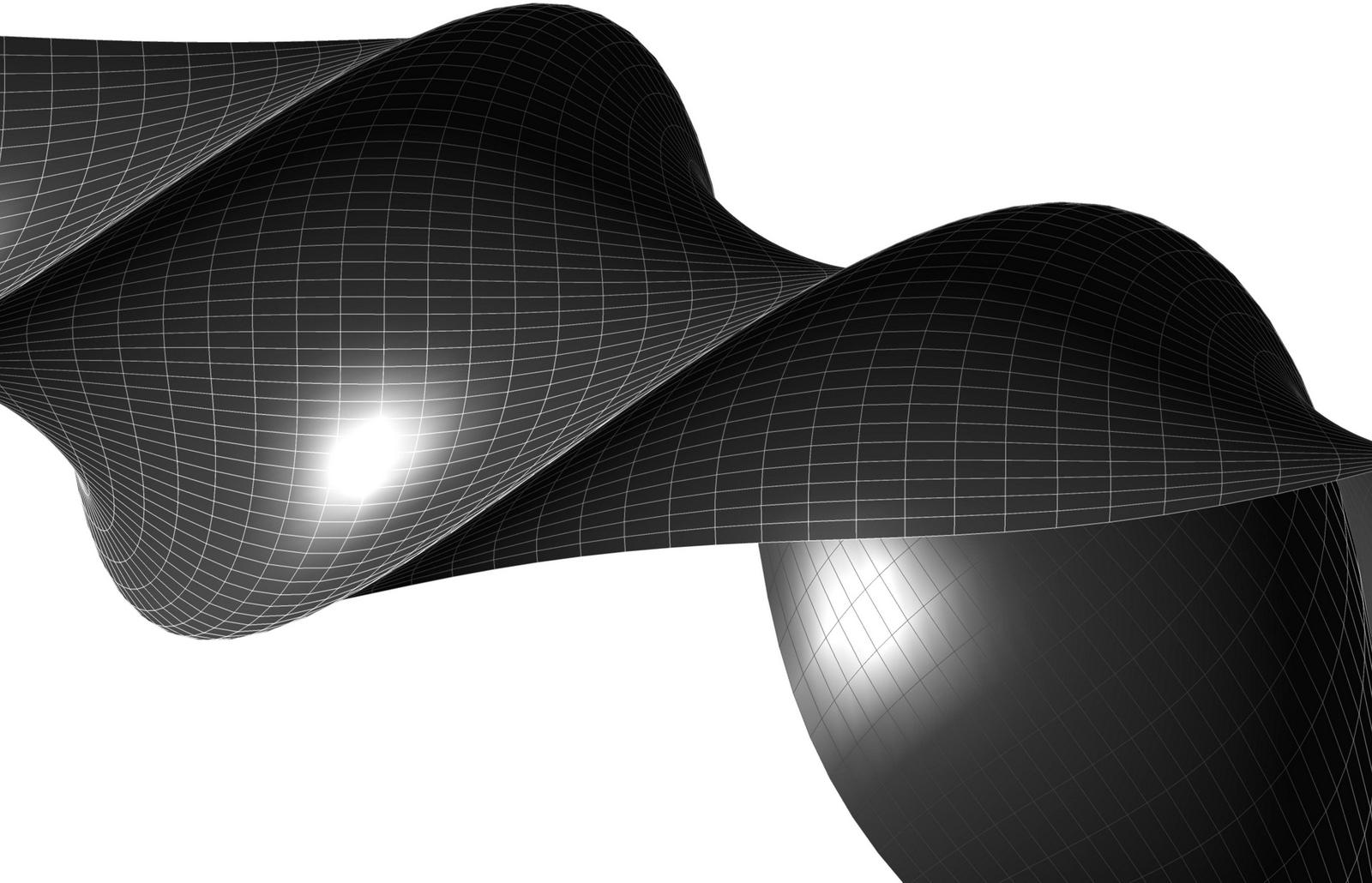}   
 \vspace{2ex} \\
\mu=1, ~ K=-1, & 
 \mu=4,~ K=\frac{-16}{25},  \\
 \textup{target } {\mathbb E}^3. & \textup{target } \SSS^3. 
\end{array}
$
\caption{Pseudospherical surface of revolution in ${\mathbb E}^3$, and a constant
negative curvature analogue in $\SSS^3$. See also Figure \ref{figure1}. } 
\label{figureintro2}
\end{figure}

\subsection{Examples} Figure \ref{figureintro2} 
shows the well-known pseudospherical surface of revolution, together with a corresponding constant negative curvature surface in $\SSS^3$ obtained, by a different projection, from the same 
loop group frame. 
The surface in $\SSS^3$ is mapped diffeomorphically to $\real^3$, by the stereographic projection, for rendering.
 See Example \ref{example1} below.

Figure \ref{figureintro1} 
shows Amsler's pseudospherical surface in ${\mathbb E}^3$,
which contains   two intersecting straight lines, together with 
a corresponding surface of constant curvature $K=16/25$ in $\SSS^3$, also obtained  from the same loop group frame.    The two straight lines correspond to two great circles.  The great circles appear as straight lines in the image obtained by stereographic projection to $\real^3$.  This example shows that, although the singular sets in the coordinate domain are the same for every surface in the family, the type of singularity 
can change.  The surface obtained at $\mu=-4$ apparently has a swallowtail singularity at a point where the surfaces obtained at $\mu=1$ and $\mu=4$ (See Example \ref{example2} below) each have a cuspidal edge.  This suggests that the singularities of constant curvature surfaces
in $\SSS^3$ are also worth investigating.

\begin{figure}[ht]
\centering
$
\begin{array}{ccc}
\includegraphics[width=55mm]{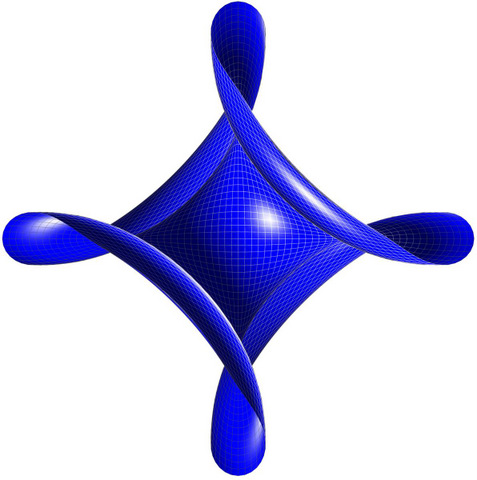}  \quad & \quad
\includegraphics[width=55mm]{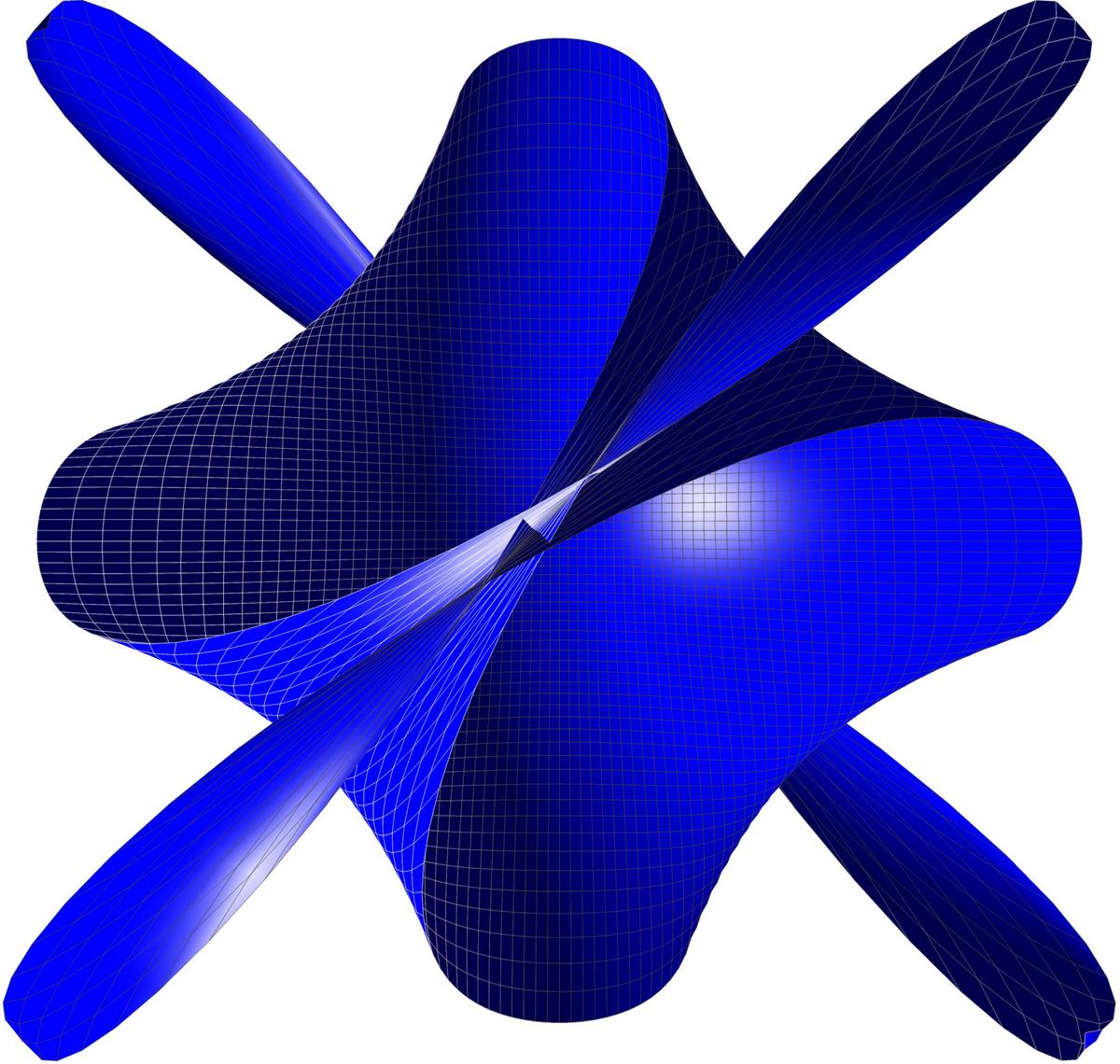}   
 \vspace{2ex} \\
\mu=1, ~ K=-1, & 
 \mu=-4,~ K=\frac{16}{25},  \\
 \textup{target } {\mathbb E}^3. & \textup{target } \SSS^3. 
\end{array}
$
\caption{Amsler's  surface in ${\mathbb E}^3$, and a constant positive curvature analogue in $\SSS^3$. See also Figure \ref{figureA}. } 
\label{figureintro1}
\end{figure}
 \subsection{Comparison with other methods}
It should be noted that Ferus and Pedit \cite{FP2} gave a very nice loop group representation
for isometric immersions of space forms $M_c^n \to \widetilde M_{\tilde c}^{n+k}$ with flat normal bundle
for any $c \neq \tilde c$, with $c \neq 0 \neq \tilde c$.
 Finite type solutions can be generated using the
modified AKS theory described in \cite{FP2}, and all solutions can, in principle,
be constructed from curved flats using the generalized DPW method described in \cite{BD}. For the case
of surfaces, as in the present article, the construction of 
Ferus and Pedit is quite different from the Lorentzian harmonic map approach used here. 
For surfaces, the Lorentzian harmonic map representation is probably more useful, since one obtains, via the
generalized d'Alembert  method, all solutions from essentially arbitrary 
pairs of functions of one variable only: this is the key, for example, to the solution
of the geometric Cauchy problem in \cite{BS1}.
If one were to use the setup in \cite{FP2}, and the generalized DPW method of \cite{BD},
which is the analogue of generalized d'Alembert  method, one instead obtains a curved flat
in the Grassmannian $SO(4)/SO(2) \times SO(2)$ as the basic data, which is not as simple.
In contrast, our basic data are essentially arbitrary functions of one variable.

Another interesting difference between the two approaches is the following: 
we will show below that the loop group frame corresponding to a surface of constant curvature $K<0$ in
$\SSS^3$ also corresponds to a surface with $0<K<1$ in $\SSS^3$, giving some kind of 
Lawson correspondence between two surfaces, one of which has negative curvature
and the other positive.
This correspondence is obtained by evaluating at a 
different value of the loop parameter $\lambda$.   On the other hand,  in 
\cite{BranderAGAG2007}, the loop group maps of Ferus and Pedit are also found
to produce Lawson-type correspondences between various isometric immersions of 
space forms by evaluating in different ranges of $\lambda$.  In this case however,
one does not obtain such a correspondence between surfaces with positive and 
negative curvature.

Finally, we should observe that 
Xia \cite{xia} has also studied isometric 
immersions of constant curvature surfaces in space forms via loop group methods. 
 In that work, for surfaces
in $\SSS^3$, the group $SO(4)$ is used (as opposed to  $SU(2) \times SU(2)$, used here) and 
a loop group representation for the surfaces is given.  However, the generalized d'Alembert  method to construct solutions is not given, and neither is the equivalence of this
problem with Lorentz harmonic maps via the normal Gauss map.   It turns out to be difficult to find
a suitable loop group decomposition in the $SO(4)$ setup used in \cite{xia}, which is 
really the setup for Lorentz harmonic maps into the Grassmannian $SO(4)/SO(2) \times SO(2)$.
The essential problem is that the surfaces in question are not associated to arbitrary 
harmonic maps in the Grassmannian, but very special ones.  In contrast, our use of the group
$SU(2) \times SU(2)$ leads naturally to the normal Gauss map, the harmonicity of which 
is a basic characterization of these surfaces: this leads to a straightforward solution in
terms of the known method for Lorentz harmonic maps.


\section{Preliminaries}\label{sc:Sphere}
\subsection{The symmetric space $\SSS^3$}
 Let $\mathbb{E}^{4}$ be the \textit{Euclidean $4$-space} 
 with standard inner product
$$
\langle \Vec{x},\Vec{y}\rangle=x_{0}y_{0}+x_{1}y_{1}+x_{2}y_{2}+x_{3}y_{3}.
$$ 
We denote by ${e}_{0}=(1,0,0,0)$, ${e}_{1}=(0,1,0,0)$,
${e}_{2}=(0,0,1,0)$, 
${e}_{3}=(0,0,0,1)$ the natural basis of $\mathbb{E}^{4}$.
 
The \textit{orthogonal group} $O(4)$ is defined by
$$
{O}(4)=\{A\in{GL}(4,\mathbb{R})
 \
 \vert
 \
 A^{T}A={I}\}.
$$
Here $I$ is the identity matrix.
We denote by $SO(4)$, the identity component 
of $O(4)$ (called the \textit{rotation group}).

Let us denote by $\mathbb{S}^3$, the unit $3$-sphere 
in $\mathbb{E}^4$ centred at the origin. The unit 3-sphere is 
a simply connected Riemannian space form of constant curvature $1$.

The rotation group $SO(4)$ 
acts isometrically and transitively on $\mathbb{S}^3$ and 
the isotropy subgroup at 
 ${e}_{0}$ is ${SO}(3)$. 
 Hence $\mathbb{S}^{3}={SO}(4)/{SO}(3)$. 
 This representation is a 
 Riemannian symmetric space representation of 
$\mathbb{S}^3$ with involution $\mathrm{Ad}_{\mathrm{diag}(-1,1,1,1)}$.

\subsection{The unit tangent sphere bundle}\label{sc:unit tangent sphere bundle}
Let us denote by $\mathrm{U}\mathbb{S}^3$ the 
\textit{unit tangent sphere bundle} of $\mathbb{S}^3$.
Namely, $\mathrm{U}\mathbb{S}^3$ is the manifold of all unit tangent 
vectors of $\mathbb{S}^3$ and identified with the submanifold
$$
\left\{
(\Vec{x},\Vec{v})\
\vert
\
\langle \Vec{x},\Vec{x}\rangle=
\langle \Vec{v},\Vec{v}\rangle=1,\
\langle \Vec{x},\Vec{v}\rangle=0\
\right\}
$$
of $\mathbb{E}^{4}\times \mathbb{E}^{4}$. 
The tangent space
 $T_{(\Vec{x},\Vec{v})}\mathrm{U}\mathbb{S}^{3}$ at a point
 $(\Vec{x},\Vec{v})$ is expressed as
$$
T_{(\Vec{x},\Vec{v})}\mathrm{U}\mathbb{S}^3=
\{(X,V)\in \mathbb{E}^{4}\times 
\mathbb{E}^{4}
\
\vert
\
\langle \Vec{x},X\rangle=
\langle \Vec{v},V\rangle=0,\ 
\langle\Vec{x}, V \rangle + \langle \Vec{v},
X\rangle =0 
\}.
$$
 Define a $1$-form $\omega$ on $\mathrm{U}\mathbb{S}^3$ by
$$
\omega_{(\Vec{x},\Vec{v})}(X,V)=\langle X,\Vec{v}\rangle=-\langle
\Vec{x},V\rangle.
$$
 Then one can see that $\omega$ is a \textit{contact form} on
 $\mathrm{U}\mathbb{S}^{3}$, \textit{i.e.},
 $(\mathrm{d}\omega)^{2}\wedge \omega\not=0$. The distribution 
$$
\mathcal{D}_{(\Vec{x},\Vec{v})}:=
\{(X,V)\in T_{(\Vec{x},\Vec{v})}\mathrm{U}
\mathbb{S}^3
\
\vert
\
\omega_{(\Vec{x},\Vec{v})}(X,V)=0
\}
$$ 
is called the 
 \textit{canonical contact structure} of
 $\mathrm{U}\mathbb{S}^{3}$.

 The rotation group ${SO}(4)$ 
 acts on $\mathrm{U}\mathbb{S}^3$ via the action
 $A \cdot (\Vec{x}, \Vec{v}) = (A \Vec{x},A\Vec{v})$. It is easy to see that
 under this action the unit tangent sphere bundle
 $\mathrm{U}\mathbb{S}^{3}$ is a homogeneous space 
 of ${SO}(4)$. The isotropy subgroup at
 $(\Vec{e}_{0},\Vec{e}_{1})$ is
$$
\left\{
\left(
\begin{array}{cc}
{I}_2 & 0\\
0 & b 
\end{array}
\right)
\
\biggr
\vert
\
b \in {SO}(2)
\right\}.
$$
Here $I_2$ is the identity matrix of degree $2$.
 Hence $\mathrm{U}\mathbb{S}^3=
 {SO}(4)/{SO}(2)$. The invariant Riemannian metric induced on 
 $\mathrm{U}\mathbb{S}^3={SO}(4)/{SO}(2)$ is a normal homogeneous metric
 (and hence naturally
 reductive), but not Riemannian symmetric. Note that 
 $\mathrm{U}\mathbb{S}^3$ 
 coincides with the \textit{Stiefel manifold} of oriented $2$-frames in $\mathbb{E}^{4}$.
 
\subsection{The space of geodesics}\label{sc:Geo}
 Next we consider $\mathrm{Geo}(\mathbb{S}^3)$ 
 the space of all oriented
 geodesics in $\mathbb{S}^3$.
Take a geodesic $\gamma\in
 \mathrm{Geo}(\mathbb{S}^3)$, then  $\gamma$ is given by the
 intersection of $\mathbb{S}^3$ with an 
oriented $2$-dimensional linear subspace $W$ in $\mathbb{E}^4$.
By identifying $\gamma$ with $W$,
 the space $\mathrm{Geo}(\mathbb{S}^3)$ is identified with the 
 Grassmann manifold
 $\mathrm{Gr}_{2}(\mathbb{E}^{4})$ of oriented $2$-planes in 
 Euclidean $4$-space. The natural projection 
 $\pi_{1}:\mathrm{U}\mathbb{S}^3\to \mathrm{Geo}(\mathbb{S}^3)$ is 
 regarded as the map 
 $$
 \pi_{1}(\Vec{x},\Vec{v})=\mbox{the geodesic}\>\gamma \>\mbox{satisfying the condition}\>
 \gamma(0)=\Vec{x},\ \gamma^{\prime}(0)=\Vec{v}.
 $$
The rotation group $SO(4)$ acts 
isometrically and transitively on $\mathrm{Geo}(\mathbb{S}^3)$. 
The isotropy subgroup 
 at ${e}_{0}\wedge{e}_{1}$ is ${SO}(2)\times {SO}(2)$.

 Therefore, the tangent space $T_{{e}_{0}\wedge{e}_{1}}
 \mathrm{Geo}(\mathbb{S}^3)$
 is identified with the linear subspace 
$$
\left\{
\left(
\begin{array}{cccc}
0 & 0 & -x_2 & -x_3\\
0 & 0 & -x_{21} & -x_{31}\\
x_{2} & x_{21} & 0 & 0\\
x_{3} & x_{31} & 0 & 0
\end{array}
\right)\right\}
$$
of $\mathfrak{so}(4)$.
The standard invariant complex structure $J$  
on $\mathrm{Geo}(\mathbb{S}^3)=\mathrm{Gr}_{2}(\mathbb{E}^{4})$ is 
given explicitly by
$$
J
\left(
\begin{array}{cccc}
0 & 0 & -x_2 & -x_3\\
0 & 0 & -x_{21} & -x_{31}\\
x_{2} & x_{21} & 0 & 0\\
x_{3} & x_{31} & 0 & 0
\end{array}
\right)
=
\left(
\begin{array}{cccc}
0 & 0 & x_{21} & x_{31}\\
0 & 0 & -x_{2} & -x_{3}\\
-x_{21} & x_{2} & 0 & 0\\
-x_{31} & x_{3} & 0 & 0
\end{array}
\right).
$$
One can see that $\mathrm{Gr}_{2}(\mathbb{E}^{4})$
is a Hermitian symmetric space with Ricci tensor $2\langle\cdot,
\cdot\rangle$. The K{\"a}hler form $\Omega$ is related to the contact form 
$\omega$ by $\pi_{1}^{*}\Omega
=d\omega$.

\section{Surface theory in $\mathbb{S}^3$}\label{sc:SurfacesS3}

\subsection{The Lagrangian and Legendrian Gauss maps}
 Let $f:M\to \mathbb{S}^3\subset \mathbb{E}^{4}$ be a conformal
 immersion of a Riemann surface with unit normal vector field $n$. 
 Then we define the (\textit{Lagrangian}) \textit{Gauss map} $L$ of $f$ by 
$$
L:=f \wedge n :M \to \mathrm{Gr}_{2}(\mathbb{E}^4).
$$ 
One can see that $L$ is an immersion and in addition,
it is \textit{Lagrangian} with respect to the canonical 
symplectic form $\Omega$ of $\mathrm{Gr}_{2}(\mathbb{E}^4)$, 
\textit{i.e.}, $L^{*}\Omega=0$. Under the identification  $\mathrm{Gr}_{2}(\mathbb{E}^4)
=\mathrm{Geo}(\mathbb{S}^3)$, the Lagrangian Gauss map is referred as the
oriented normal geodesic of $f$ (and called the \textit{spherical Gauss map}). 
  
On the other hand, we have a map $\mathcal{L}:=(f,n):M\to \mathrm{U}\mathbb{S}^3$. 
This map is \textit{Legendrian} with respect to the canonical contact structure of 
$\mathrm{U}\mathbb{S}^3$,
that is, $\mathcal{L}^{*}\omega=0$. This map $\mathcal{L}$ is called 
the \textit{Legendrian Gauss map} of $f$.
 
\subsection{Parallel surfaces} 
An oriented \textit{geodesic congruence} in $\mathbb{S}^3$ is an immersion 
of a $2$-manifold $M$ into the space $\mathrm{Geo}(\mathbb{S}^3)$ of geodesics. 
Now let $f:M\to \mathbb{S}^3$ be a surface with unit normal $n$.
Then a \textit{normal geodesic congruence} through $f$ at a distance $r$ is 
the map $f^r:M\to \mathbb{S}^3$ defined by
$$
f^r:=\cos r {f}+\sin {r}n.
$$ 
If $f$ satisfies the condition
$
\cos (2r)-\sin (2r)H+\sin^{2}(r)K\not=0
$
then $f^r$ is an immersion. Here $H$ and $K$ are the 
mean and Gauss curvatures of $f$, 
respectively. 
If  $f^r$ is an immersion, then it is called the 
\textit{parallel surface} of $f$ at the distance $r$.
The correspondence $f\longmapsto f^r$ is called the
\textit{parallel transformation}.

 \subsection{Legendrian lifts, frontals and fronts.}
 The Gauss map $L$ of an oriented surface 
 $f:M
\to \mathbb{S}^3$ with unit normal $n$ is a Lagrangian immersion
into $\mathrm{Gr}_{2}(\mathbb{E}^4)$. 
Conversely, we have the following fact (see \cite{Palmer}): 
\begin{proposition} Let $L:M\to \mathrm{Gr}_{2}(\mathbb{E}^4)$ be 
a Lagrangian immersion. Then, locally, $L$ is a projection of 
a Legendrian immersion $\mathcal{L}:M \to \mathrm{U}\mathbb{S}^3$.
The Legendrian immersion is unique up to parallel transformations.
\end{proposition}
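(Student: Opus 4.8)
The plan is to exploit the fact that $\pi_{1}:\mathrm{U}\mathbb{S}^3\to\mathrm{Geo}(\mathbb{S}^3)=\mathrm{Gr}_{2}(\mathbb{E}^4)$ is a circle bundle whose fibre over a geodesic $\gamma=W$ consists precisely of the oriented orthonormal frames $(\Vec{x},\Vec{v})$ spanning $W$, that is, of the orbit of the parallel transformation $(\Vec{x},\Vec{v})\mapsto(\cos r\,\Vec{x}+\sin r\,\Vec{v},\,-\sin r\,\Vec{x}+\cos r\,\Vec{v})$. Since $\pi_{1}$ is a submersion, the immersion $L$ admits, near any point of $M$, a smooth lift $\widetilde{\mathcal{L}}:M\to\mathrm{U}\mathbb{S}^3$ with $\pi_{1}\circ\widetilde{\mathcal{L}}=L$. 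The whole problem is then to adjust this arbitrary lift, by a pointwise parallel transformation, so that the result becomes Legendrian; this is where the Lagrangian hypothesis $L^{*}\Omega=0$ must enter.

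The first step is to observe that $\alpha:=\widetilde{\mathcal{L}}^{*}\omega$ is a closed $1$-form on $M$. Indeed, using the relation $\pi_{1}^{*}\Omega=d\omega$ recorded in Section~\ref{sc:Geo},
\[
d\alpha=\widetilde{\mathcal{L}}^{*}d\omega=\widetilde{\mathcal{L}}^{*}\pi_{1}^{*}\Omega=(\pi_{1}\circ\widetilde{\mathcal{L}})^{*}\Omega=L^{*}\Omega=0,
\]
so by the Poincar\'e lemma there is, locally, a function $r:M\to\real$ with $dr=-\alpha$. The second, and crucial, step is to understand how $\omega$ transforms along the fibres. Let $\xi$ denote the vector field on $\mathrm{U}\mathbb{S}^3$ generating the parallel-transformation flow $\phi_{r}$; at $(\Vec{x},\Vec{v})$ one has $\xi=(\Vec{v},-\Vec{x})$, whence $\omega(\xi)=\langle\Vec{v},\Vec{v}\rangle=1$. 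Because $\xi$ is vertical, $\iota_{\xi}\pi_{1}^{*}\Omega=0$, so Cartan's formula gives $\mathscr{L}_{\xi}\omega=\iota_{\xi}d\omega+d(\iota_{\xi}\omega)=\iota_{\xi}\pi_{1}^{*}\Omega+d(1)=0$; that is, $\phi_{r}^{*}\omega=\omega$.

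Setting $\mathcal{L}(p):=\phi_{r(p)}(\widetilde{\mathcal{L}}(p))$ and differentiating through the flow, the invariance $\phi_{r}^{*}\omega=\omega$ together with $\omega(\xi)=1$ yields the transformation law $\mathcal{L}^{*}\omega=\alpha+dr$. With $r$ chosen as above this reads $\alpha-\alpha=0$, so $\mathcal{L}$ is Legendrian; and since $\pi_{1}\circ\mathcal{L}=L$ is an immersion while $\pi_{1}$ is a submersion, $\mathcal{L}$ is itself an immersion. For uniqueness, any two Legendrian lifts $\mathcal{L}_{1},\mathcal{L}_{2}$ of $L$ differ by a vertical displacement, $\mathcal{L}_{2}(p)=\phi_{s(p)}(\mathcal{L}_{1}(p))$ for some $s:M\to\real$; the same transformation law gives $0=\mathcal{L}_{2}^{*}\omega=\mathcal{L}_{1}^{*}\omega+ds=ds$, so $s$ is locally constant and $\mathcal{L}_{2}$ arises from $\mathcal{L}_{1}$ by a single parallel transformation.

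I expect the main obstacle to be the second step: establishing the transformation law $\mathcal{L}^{*}\omega=\alpha+dr$ rigorously, which rests on correctly identifying the vertical generator $\xi$, verifying the normalization $\omega(\xi)=1$, and confirming the flow-invariance of $\omega$ via $\pi_{1}^{*}\Omega=d\omega$. Once these are in hand, the closedness of $\alpha$ and the Poincar\'e lemma make the existence and uniqueness conclusions follow essentially \emph{for free}.
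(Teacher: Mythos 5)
Your argument is correct, and it is worth noting that the paper itself offers no proof of this proposition: it simply cites Palmer. Your proof is the standard prequantization/contact-lifting argument, and all the ingredients check out against the paper's conventions: the fibre of $\pi_{1}$ over an oriented plane $W$ is indeed the circle of oriented orthonormal frames of $W$, swept out by the flow $\phi_{r}(\Vec{x},\Vec{v})=(\cos r\,\Vec{x}+\sin r\,\Vec{v},\,-\sin r\,\Vec{x}+\cos r\,\Vec{v})$, which on a lift $(f,n)$ is exactly the parallel transformation $f\mapsto f^{r}=\cos r\,f+\sin r\,n$; the generator $\xi=(\Vec{v},-\Vec{x})$ is tangent to $\mathrm{U}\mathbb{S}^{3}$, vertical, and satisfies $\omega(\xi)=\langle\Vec{v},\Vec{v}\rangle=1$; and the relation $\pi_{1}^{*}\Omega=d\omega$ from Section~\ref{sc:Geo} gives both $\mathscr{L}_{\xi}\omega=0$ and the closedness of $\alpha=\widetilde{\mathcal{L}}^{*}\omega$ via $d\alpha=L^{*}\Omega=0$. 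The one step you flag as delicate, the transformation law $\mathcal{L}^{*}\omega=\alpha+dr$, is easily made rigorous by factoring $\mathcal{L}$ through $\Phi:M\times\real\to\mathrm{U}\mathbb{S}^{3}$, $\Phi(p,t)=\phi_{t}(\widetilde{\mathcal{L}}(p))$, and computing $\Phi^{*}\omega=\alpha+dt$ from $\phi_{t}^{*}\omega=\omega$ and $\omega(\xi)=1$; only a remark that a smooth branch of the fibre coordinate $s$ exists locally (the fibres being circles) is needed to complete the uniqueness step. What your self-contained argument buys over the paper's bare citation is transparency about exactly where the Lagrangian hypothesis enters (closedness of $\alpha$, hence the Poincar\'e lemma) and why uniqueness holds only up to the one-parameter family of parallel transformations rather than on the nose.
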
 
The Legendrian immersion $\mathcal{L}$ is called  
a \textit{Lie surface} in \cite{Palmer}. 
If $f:M\to \mathbb{S}^3$
is an immersion with unit normal $n$, then 
$\mathcal{L}:=(f,n)$ is a Legendrian immersion into $\mathrm{U}\mathbb{S}^3$.
However, even if $\mathcal{L}$ is a Legendrian immersion, then
$f:=\pi_{2}\circ \mathcal{L}$ need not be an immersion 
although it possesses a unit normal $n$. 
Here $\pi_{2}:\mathrm{U}\mathbb{S}^3\to \mathbb{S}^3$ is the 
natural projection.
\begin{remark}
{\rm
A smooth map $f:M\to \mathbb{S}^3$ is 
called a \textit{frontal} if for any point 
$p\in M$, there exists a neighborhood $\mathcal{U}$ of $p$ and 
a unit vector field $n$ along $f$ defined on $\mathcal{U}$ 
such that $\langle \mathrm{d}f,n\rangle=0$. 
A frontal is said to be 
\textit{co-orientable} if there exists a unit vector field $n$ along 
$f$ such that $\langle \mathrm{d}f,n\rangle=0$. 
Namely a co-orientable frontal is a smooth map 
$f:M\to \mathbb{S}^3$ which has a lift $\mathcal{L}=(f,n)$ 
to $\mathrm{U}\mathbb{S}^3$ satisfying the 
\textit{Legendrian condition} $\mathcal{L}^{*}\omega=\langle\mathrm{d}f,n\rangle=0$.
A co-orientable frontal is called a \textit{front} if its 
Legendrian lift is an immersion.

Our main interest is surfaces of constant curvature 
$K<1$ in $\mathbb{S}^3$.
Except the case $K=0$, any surface of constant 
Gauss curvature $K<{1}$ has singularities.
A theory of the singularities of fronts can be found in 
Arnold \cite{Arnold}. 
Geometric concepts, such as curvature and 
completeness, for surfaces with singularities 
have been defined by Saji, Umehara and Yamada in \cite{SUY}.
}
\end{remark}
 \subsection{Asymptotic coordinates}
 Hereafter assume that the Gaussian curvature $K$ is less than $1$.
 This implies that the second fundamental form $\mathrm{II}$ derived from $n$ is
 a possibly singular Lorentzian metric on $M$.

 Represent $K$ as $K=1-\rho^2$ for some positive 
 function $\rho$ and take a local \textit{asymptotic 
coordinate system} $(u,v)$ defined on a simply 
connected domain $\mathbb{D}\subset M$.
Then the first fundamental form $\mathrm{I}$ and second 
fundamental form $\mathrm{II}$ are given by (see \textit{eg.} \cite{Moore})
\beq \label{firstandsecondff}
\mathrm{I}=A^2\> \mathrm{d}u^2+2AB\cos \phi\>\mathrm{d}u\mathrm{d}v+B^2\> \mathrm{d}v^2,
\ \
\mathrm{II}=2{\rho}AB\sin \phi\> \mathrm{d}u\mathrm{d}v.
\eeq
Note that asymptotic coordinates $(u,v)$ are \textit{conformal} with 
respect to the second fundamental form. 
We may regard $M$ as a (singular) \textit{Lorentz surface} (\cite{Klotz3}) 
with respect to the conformal structure determined by 
$\mathrm{II}$ (called the \textit{second conformal structure} \cite{Klotz}, \cite{Klotz2}). 
Thus one can see that 
$$
C=A^2\>\mathrm{d}u^2+B^2\>\mathrm{d}v^2
$$
is well defined on $M$. 

The \textit{Gauss equation} is given by
$$
\phi_{uv}-
\left(
\frac{\rho_v}{2\rho}\frac{B}{A}\sin \phi\right)_{u}
-
\left(
\frac{\rho_u}{2\rho}\frac{A}{B}\sin \phi\right)_{v}
+(1-\rho^2)AB\sin \phi=0.
$$
Now we introduce functions $a$ and $b$ by 
$a=A\rho$ and $b=B\rho$. The 
\textit{Codazzi equations} are
\beq \label{codazzieqns}
a_{v}-\frac{\rho_v}{2\rho}a+\frac{\rho_u}{2\rho}b\cos \phi=0,
\ \
b_{u}-\frac{\rho_u}{2\rho}b+\frac{\rho_v}{2\rho}a\cos \phi=0.
\eeq
The Codazzi equations imply that if $K$ is constant, then 
we have $a_v=b_u=0$. In addition, 
the Gauss-Codazzi equations are 
invariant under the deformation:
$$
a\longmapsto \lambda{a}, \ \
b\longmapsto \lambda^{-1}b,  \ 
\lambda\in \mathbb{R}^{*}:=\mathbb{R}\setminus \{0\}.
$$
Thus there exists a one-parameter deformation $\{f_\lambda\}_{\lambda\in \mathbb{R}^*}$ 
of $f$ preserving the second fundamental form and the Gauss curvature. 
The resulting family is called the \textit{associated family} of $f$. 
The existence of the associated family motivates us to study constant Gauss curvature 
surfaces in $\mathbb{S}^3$ by loop group methods.

\section{The loop group formulation}
\subsection{The $SU(2)\times SU(2)$ frame}
Let us now identify $\SSS^3$ with $SU(2)$, via 
\bdm
 (z,w) \in \SSS^3 \subset \real^4=\C^2 \quad  \longleftrightarrow  \quad \bbar{cc} z & w \\ -\bar w & \bar z\ebar \in SU(2).
\edm
The standard metric $g$ on $\SSS^3$ is then given by left translating  $V, W \in T_x \SSS^3$ to the tangent space at the identity, $T_e SU(2) = \mathfrak{su}(2)$, i.e.
\bdm
g(V,W) := \left < x^{-1} V , x^{-1} W \right >,
\edm
where the inner product on $\mathfrak{su}(2)$ is given by $\left< X , Y \right> = -\textup{Tr}(XY)/2$.
The natural basis $\{e_0,e_1,e_2,e_3\}$ of $\mathbb{E}^4$ is identified with   
\bdm
e_0=e=I_2, \ 
e_1 = \bbar{cc} 0 & 1 \\ -1 & 0 \ebar, \quad e_2 = \bbar{cc} 0 & i \\ i & 0 \ebar, \quad
 e_3 = \bbar{cc} i & 0 \\ 0 & -i\ebar.
\edm
Note that $\{e_1,e_2,e_3\}$ is an orthonormal basis of $\mathfrak{su}(2)$.
We have the commutators $[e_1, e_2] = 2 e_3$, $[e_2, e_3] = 2 e_1$ and $[e_3, e_1] = 2 e_2$, so that
the cross product on ${\mathbb E}^3$ is given by $A \times B = \frac{1}{2}[A,B]$. 
Note that $\mathbb{S}^3$ is represented by $(SU(2)\times SU(2))/SU(2)$ as a 
Riemannian symmetric space. The natural projection is given by 
$(G,F)\longmapsto 
GF^{-1}$.

Let $M$ be a simply connected $2$-manifold, and suppose given an immersion $f: M \to \SSS^3$, with global asymptotic coordinates $(u,v)$,
and first and second fundamental fundamental forms as above at (\ref{firstandsecondff}). Set $\theta=\phi/2$ and
\beqas
\xi_1 = \cos (\theta) e_1 - \sin (\theta) e_2 = \bbar {cc} 0 & e^{-i\theta} \\ -e^{ i\theta} & 0 \ebar,\\
\xi_2 = \cos (\theta) e_1 + \sin (\theta) e_2 = \bbar {cc} 0 & e^{i\theta} \\ -e^{ -i\theta} & 0 \ebar.
\eeqas
Then $\left <\xi_1, \xi_2\right> = \cos \phi$, and so we can define a map $F: M \to SU(2)$ by the equations
\beq  \label{coordframedef}
f^{-1} f_u = A \, \Ad _F \xi_1, \quad f^{-1} f_v = B \, \Ad _F \xi_2, \quad f^{-1} n = \Ad _F e_3,
\eeq
where $n$ is the unit normal given by $n= (AB)^{-1}f (f^{-1} f_u \times  f^{-1} f_v)$.

Setting $G = f F$, the map $\mathcal{F} = (F, G) : M \to SU(2) \times SU(2)$ is a lift of $f$, and the projection to $SU(2)$ 
is given by
\bdm
f = GF^{-1}.
\edm
We call $\mathcal{F}$ the \emph{coordinate frame} for $f$. We now want to get expressions for the Maurer-Cartan forms of $F$ and $G$.
Differentiating $G = fF$, and substituting in the expressions at (\ref{coordframedef}) for $f^{-1} f_u$ 
and $f^{-1} f_v$, we obtain
\beq
\begin{split}
\label{FGdifference}
G^{-1} G_u - F^{-1} F_u = A \xi_1, \\
 G^{-1} G_v - F^{-1} F_v = B\xi_2.
 \end{split}
\eeq
Now write $F^{-1} F_u = a_1 e_1 + a_2 e_2 + a_3 e_3$.
  Differentiating the expression $f^{-1} f_u = A \Ad_F \xi_1$, we obtain
\beqas
f^{-1} f_{uu} &=& A^2 \Ad_F \xi_1^2 + \frac{\partial A}{\partial u} \Ad_F \xi_1 + A \, \Ad_F [F^{-1} F_u , \xi_1] + A \, \Ad_F \frac{\partial\xi_1}{\partial u} \\
&=& A \, \Ad_F \left( -A e_0 + A^{-1}  \frac{\partial A}{\partial u} \xi_1  \right.\\
   &&  \quad \quad   + \left. [a_1 e_1 + a_2 e_2 + a_3 e_3 , \cos \theta e_1 - \sin \theta e_2] +  \frac{\partial \xi_1}{\partial u}\right) \\
&=& A(-2 a_1 \sin \theta -2 a_2 \cos \theta) \, \Ad_F e_3 + \Ad_F ( d_0 e_0 + d_1 e_1 + d_2 e_2),
\eeqas
where we are only interested in the coefficient of $\Ad_F e_3$, that is, of  $f^{-1} n$. Since the second fundamental form is assumed to be $\mathrm{II} = 2 \rho AB \sin \phi \dd u \dd v$, we know that $\left< f^{-1}n , f^{-1}f_{uu} \right> = 0$.  Hence the coefficient of $n$ in the above equation is zero: $0 = A(-2 a_1 \sin \theta -2 a_2 \cos \theta)$, or
\bdm
a_2 = -a_1 \tan \theta.
\edm
Next, differentiating $f^{-1} f_v = B \, \Ad_F \xi_2$ with respect to $u$, we deduce:
\bdm
f^{-1} f_{uv} = AB \, \Ad_F (\xi_1 \xi_2) + \Ad_F \left(\frac{\partial B}{\partial u} \xi_2 + B [F^{-1}F_u, \xi_2] + B \frac{\partial \xi_2}{\partial u}\right), 
\edm
and the coefficient of $\Ad_F e_3$ on the right hand side is $AB \sin 2\theta + B(2a_1 \sin \theta - 2 a_2 \cos \theta)$.  Substituting in $a_2 = -a_1 \tan \theta$, the equation
 $\left< f^{-1}n , f^{-1}f_{uv} \right> =  \rho AB \sin \phi$ then becomes
 \bdm
 \rho AB \sin 2 \theta = AB \sin 2 \theta + B 4 a_1 \sin \theta.
 \edm
Hence, $a_1 = A(\rho-1) \cos(\theta)/2$, and $a_2 = -A(\rho-1) \sin(\theta)/2$. Writing $U_0 := a_3 e_3$,
we have:
\bdm
F^{-1} F_u = U_0 + \frac{\rho-1}{2}A \xi_1.
\edm
From the equations (\ref{FGdifference}) we also have:
\bdm
G^{-1} G_u = U_0 + \frac{\rho+1}{2} A \xi_1.
\edm
Similarly, one obtains the expressions: 
$ F^{-1} F_v = V_0 -\frac{\rho+1}{2} B \xi_2$ and
   $G^{-1} G_v = V_0 - \frac{\rho-1}{2} B \xi_2$, where $V_0$ is a scalar times $e_3$.

The Maurer-Cartan form, $\alpha = F^{-1} \dd F$, of $F$ thus has the expression
\begin{flalign} \label{alphaexpression}
\begin{split}
  & \alpha =  \alpha_0 + \alpha_1 + \alpha_{-1}, \\
& \alpha_0 = U_0 \dd u + V_0 \dd v, \quad \alpha_1 = \frac{\rho-1}{2} A \xi_1 \dd u,
\quad  \alpha_{-1} = -\frac{\rho+1}{2} B \xi_2 \dd v,
\end{split}
\end{flalign}
 and similarly,   $G^{-1} \dd G = \beta = \beta_0 + \beta_1 + \beta_{-1}$, with
\beq  \label{betaexpression}
\beta_0 = U_0 \dd u + V_0 \dd v, \quad \beta_1 = \frac{\rho+1}{2} A \xi_1 \dd u,
\quad  \beta_{-1} = - \frac{\rho-1}{2} B \xi_2 \dd v.
\eeq
One can check that $U_0$ and $V_0$ are of the form 
$U_{0}=-\theta_{u}e_3/2$ and $V_{0}=\theta_{v}e_{3}/2$.

\subsection{Ruh-Vilms property}
Now we investigate Lorentz harmonicity, with respect to the 
second conformal structure,
of the \textit{normal Gauss map} $\nu=f^{-1}n$ of $f$. 
By definition, $\nu$ takes value in the unit $2$-sphere 
$\mathbb{S}^{2}=\mathrm{Ad}_{SU(2)}e_3$ in the Lie algebra $\mathfrak{su}(2)$.
Since $f$ and $n$ are given by $f=GF^{-1}$, $\nu=\mathrm{Ad}_{F}e_3$,
 we have 
$$
\nu_{u}=\mathrm{Ad}_{F}[U,e_3], \ \
\nu_{v}=\mathrm{Ad}_{F}[V,e_3],
$$
where $U=F^{-1}F_u$ and $V=F^{-1}F_v$.
From these we have
\begin{align*}
\frac{\partial}{\partial{v}}\nu_{u}&=\mathrm{Ad}_{F}
\left(
\left[\frac{\partial{U}}{\partial{v}},e_3\right]
+\left[V,[U,e_3]\>\right]
\right),
\\
\frac{\partial}{\partial{u}}\nu_{v}&=\mathrm{Ad}_{F}
\left(
\left[\frac{\partial{V}}{\partial{u}},e_3\right]
+\left[U,[V,e_3]\>\right]
\right).
\end{align*}
Next we have
$$
\left[\frac{\partial{U}}{\partial{v}},e_3\right]
=\{A(\rho-1)\sin\theta\}_{v}e_{1}+
\{A(\rho-1)\cos\theta\}_{v}e_{2},
$$
$$
\left[\frac{\partial{V}}{\partial{v}},e_3\right]
=-\{B(\rho+1)\sin\theta\}_{u}e_{1}+
\{B(\rho+1)\cos\theta\}_{u}e_{2},
$$
$$
[V,[U,e_3]\>]=-A(\rho-1)\theta_{v}(\cos \theta{e}_1-\sin\theta{e}_2)
+\frac{1}{2}AB(\rho^2-1)\cos(2\theta)e_3,
$$
$$
[U,[V,e_3]\>]=B(\rho+1)\theta_{u}(\cos \theta{e}_1+\sin\theta{e}_2)
+\frac{1}{2}AB(\rho^2-1)\cos(2\theta)e_3.
$$
Here we recall that a smooth map $\nu:M\to \mathbb{S}^2\subset \mathbb{E}^3$
of a Lorentz surface $M$ into the $2$-sphere is 
said to be a \textit{Lorentz harmonic map} (or \textit{wave map}) if 
its tension field with respect to any Lorentzian metric in the
 conformal class vanishes. This is equivalent to the existence of
  a function $k$ such that 
$$
\nu_{uv}=k\nu
$$ 
for any conformal coordinates $(u,v)$.

First $(\nu_u)_v$ is parallel to $\nu$ if and only if
$$
A_{v}(\rho-1)+A\rho_v=0.
$$
Inserting the Codazzi equation (\ref{codazzieqns}) into this, we get 
\begin{equation}\label{RV-1}
a(\rho+1)\rho_{v}- b (\rho-1) \cos\phi\rho_u=0.
\end{equation}
Analogously, $(\nu_v)_u$ is parallel to $\nu$ if and only 
if 
$$
B_u(1+\rho)+B\rho_u=0.
$$
Inserting the Codazzi equation 
again, we get
\begin{equation}\label{RV-2}
b(\rho-1)\rho_{v}-a(\rho+1)(\cos \phi) \rho_v=0.
\end{equation}
Thus $\nu$ is Lorentz harmonic if and only if (\ref{RV-1}) and 
(\ref{RV-2}) hold.
The system (\ref{RV-1})-(\ref{RV-2}) can be written in 
the following matrix form:
$$
\left(
\begin{array}{cc}
b(\rho-1) & -a (\rho + 1)\cos \phi\\
-b(\rho-1)\cos \phi & a(\rho + 1)
\end{array}
\right)
\left(
\begin{array}{c}
\rho_u
\\
\rho_v
\end{array}
\right)
=\left(
\begin{array}{c}
0
\\
0
\end{array}
\right).
$$
The determinant of the coefficient matrix is 
computed as $ab(\rho^2-1)\sin^{2}\phi$. Thus under the condition 
$\rho\not=1$, \textit{i.e.}, $K\not=0$, we have 
$\nu$ is Lorentz harmonic if and only if $K$ is constant.

In case $\rho=1$, then we have $U=-\theta_{u}e_3/2$ and so 
$\nu_u=\mathrm{Ad}_{F}[U,e_3]=0$. Hence $\nu_{uv}=0$. 
Thus $g$ is Lorentz harmonic.
\begin{theorem}
Let $f:M\to \mathbb{S}^3$ be an isometric immersion of Gauss curvature $K<1$. 
Then the normal Gauss map $\nu$ is Lorentz harmonic with respect to the 
conformal structure determined by the second fundamental form if and only if 
$K$ is constant.
\end{theorem}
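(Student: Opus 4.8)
The plan is to compute entirely within the $SU(2)\times SU(2)$ coordinate frame, using the identity $\nu=\Ad_F e_3$ together with the characterization recalled above: with respect to the second conformal structure, in which $(u,v)$ are null coordinates, $\nu$ is Lorentz harmonic precisely when $\nu_{uv}=k\nu$ for some function $k$, that is, when $\nu_{uv}$ is everywhere parallel to $\nu$. First I would differentiate $\nu=\Ad_F e_3$ using $U=F^{-1}F_u$ and $V=F^{-1}F_v$ to obtain $\nu_u=\Ad_F[U,e_3]$ and $\nu_v=\Ad_F[V,e_3]$, and then apply the identity $\partial_v(\Ad_F X)=\Ad_F([V,X]+X_v)$ to reach $\nu_{uv}=\Ad_F\big([U_v,e_3]+[V,[U,e_3]]\big)$ (and symmetrically when the differentiations are performed in the other order). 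Since $\Ad_F$ is an orthogonal Lie algebra automorphism and $\nu=\Ad_F e_3$, the requirement that $\nu_{uv}$ be parallel to $\nu$ is equivalent to the vanishing of the $e_1$- and $e_2$-components of the bracket expression sitting inside $\Ad_F$; the $e_3$-component is harmless, since it only produces the coefficient $k$.

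The second step is to insert the explicit Maurer-Cartan data from (\ref{alphaexpression}), whose off-diagonal parts are $\tfrac{\rho-1}{2}A\xi_1$ and $-\tfrac{\rho+1}{2}B\xi_2$ together with the diagonal terms $U_0,V_0$, and to read off the tangential part of $\nu_{uv}$. I expect the two resulting scalar conditions to be $A_v(\rho-1)+A\rho_v=0$ and $B_u(1+\rho)+B\rho_u=0$, i.e. $\partial_v[A(\rho-1)]=0$ and $\partial_u[B(\rho+1)]=0$. The decisive simplification is then to use the Codazzi equations (\ref{codazzieqns}) to trade the auxiliary derivatives $A_v,B_u$ for expressions in $\rho_u,\rho_v$; substituting converts the two conditions into the homogeneous linear system (\ref{RV-1})--(\ref{RV-2}) for the pair $(\rho_u,\rho_v)$.

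The heart of the argument is to read (\ref{RV-1})--(\ref{RV-2}) as a linear system in $(\rho_u,\rho_v)$ and to compute the determinant of its coefficient matrix. I anticipate that this determinant factors cleanly as $ab(\rho^2-1)\sin^2\phi$, the off-diagonal products combining through $1-\cos^2\phi=\sin^2\phi$. Because $a=A\rho$ and $b=B\rho$ with $A,B\neq0$ and $\rho\neq0$ on the regular set, and $\sin\phi\neq0$ wherever $\mathrm{II}$ is nondegenerate, the determinant is nonzero exactly when $\rho\neq1$, i.e. when $K\neq0$. On that locus the only solution of the system is $\rho_u=\rho_v=0$, so $K$ is constant; conversely, if $K$ is constant then $\rho_u=\rho_v=0$ makes (\ref{RV-1})--(\ref{RV-2}) hold identically and $\nu$ is Lorentz harmonic. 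This establishes the equivalence wherever $K\neq0$.

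Finally I would dispose of the flat case $\rho=1$ directly: here the off-diagonal component $\tfrac{\rho-1}{2}A\xi_1$ of $U$ vanishes, so $U=U_0$ is a multiple of $e_3$ and commutes with it, whence $\nu_u=\Ad_F[U,e_3]=0$ and $\nu_{uv}=0$; thus $\nu$ is automatically Lorentz harmonic while $K\equiv0$ is constant. Combining the two cases yields the stated equivalence. I expect the main difficulty to be computational rather than conceptual: organizing the iterated brackets $[U_v,e_3]+[V,[U,e_3]]$ so that, after accounting for the diagonal terms $U_0,V_0$, the tangential part collapses to the single direction giving each scalar condition, and then checking that the Codazzi substitution produces a coefficient matrix whose determinant factors exactly as $ab(\rho^2-1)\sin^2\phi$. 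A secondary point deserving care is the degeneracy locus $\sin\phi=0$: one should confirm that it coincides with the set where $\mathrm{II}$ degenerates, so that the determinant argument is invoked precisely where the second conformal structure, and hence Lorentz harmonicity itself, is defined.
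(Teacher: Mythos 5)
Your proposal is correct and follows essentially the same route as the paper's own proof: computing $\nu_{uv}$ via the iterated brackets of $U$ and $V$ with $e_3$, extracting the two scalar conditions $A_v(\rho-1)+A\rho_v=0$ and $B_u(1+\rho)+B\rho_u=0$, substituting the Codazzi equations to obtain the homogeneous system (\ref{RV-1})--(\ref{RV-2}) with determinant $ab(\rho^2-1)\sin^2\phi$, and treating the flat case $\rho=1$ separately via $\nu_u=0$. No substantive differences to report.
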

This characterization is referred as the \textit{Ruh-Vilms property} 
for constant curvature surfaces in $\mathbb{S}^3$ with $K<1$. 

\begin{remark}{\rm
Under the identification 
$\mathbb{S}^3=(SU(2)\times SU(2))/SU(2)$, 
the space $\mathrm{Geo}(\mathbb{S}^3)$ is identified 
with the Riemannian product $\mathbb{S}^2\times \mathbb{S}^2=(SU(2)\times SU(2))/(U(1)\times U(1))$.
Moreover the Lagrangian Gauss map $L$ corresponds to the map (\textit{cf.} \cite{AA}, \cite{Kitagawa2}):
$$
L\longleftrightarrow (nf^{-1},f^{-1}n)=(\mathrm{Ad}_{G}e_3,\mathrm{Ad}_{F}e_3).
$$
Thus the Ruh-Vilms property can be rephrased as follows.
}
\end{remark} 
\begin{corollary}
Let $f:M\to \mathbb{S}^3$ be an isometric immersion of Gauss curvature $K<1$. 
Then the Lagrangian Gauss map $L$ is Lorentz harmonic with respect to the 
conformal structure determined by the second fundamental form if and only if 
$K$ is constant.
\end{corollary}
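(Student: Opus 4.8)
The plan is to deduce the corollary directly from the Ruh--Vilms theorem just proved, by exploiting the identification of $\mathrm{Geo}(\mathbb{S}^3)$ with the symmetric space $\mathbb{S}^2\times\mathbb{S}^2$ recorded in the preceding remark. Under that identification the Lagrangian Gauss map $L=f\wedge n$ corresponds to the pair $(\mathrm{Ad}_G e_3,\mathrm{Ad}_F e_3)$, where $\mathcal{F}=(F,G)$ is the coordinate frame with $G=fF$. So the content of the corollary is that the product map $(\mathrm{Ad}_G e_3,\mathrm{Ad}_F e_3):M\to\mathbb{S}^2\times\mathbb{S}^2$ is Lorentz harmonic (with respect to the second conformal structure) precisely when $K$ is constant. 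The key structural point is that harmonicity into a Riemannian product is equivalent to harmonicity of each factor separately: a map into $N_1\times N_2$ is harmonic if and only if both component maps are harmonic, since the tension field of a product map is the pair of the tension fields of the components. Hence $L$ is Lorentz harmonic if and only if \emph{both} $\nu=\mathrm{Ad}_F e_3$ and $\tilde\nu:=\mathrm{Ad}_G e_3$ are Lorentz harmonic.

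First I would observe that $\nu=\mathrm{Ad}_F e_3=f^{-1}n$ is exactly the normal Gauss map, so its Lorentz harmonicity is governed by the theorem: under the standing assumption $K<1$, $\nu$ is Lorentz harmonic if and only if $K$ is constant. It then remains only to treat the second factor $\tilde\nu=\mathrm{Ad}_G e_3=nf^{-1}$ and to show that it, too, is Lorentz harmonic if and only if $K$ is constant. For this I would repeat verbatim the computation carried out for $\nu$, but using the Maurer--Cartan form $\beta=G^{-1}\,\dd G$ in place of $\alpha=F^{-1}\,\dd F$. The explicit expressions are already available at (\ref{betaexpression}): setting $\tilde U=G^{-1}G_u$ and $\tilde V=G^{-1}G_v$, one has $\tilde\nu_u=\mathrm{Ad}_G[\tilde U,e_3]$ and $\tilde\nu_v=\mathrm{Ad}_G[\tilde V,e_3]$, and the condition that $(\tilde\nu_u)_v$ and $(\tilde\nu_v)_u$ be parallel to $\tilde\nu$ yields a linear system for $(\rho_u,\rho_v)$ of precisely the same shape as the system (\ref{RV-1})--(\ref{RV-2}). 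The only difference is the bookkeeping change $\rho-1\mapsto\rho+1$ in $\beta_1$ and $\rho+1\mapsto\rho-1$ in $\beta_{-1}$ relative to $\alpha$; the coefficient matrix again has determinant a nonzero multiple of $ab(\rho^2-1)\sin^2\phi$, which is nonvanishing exactly when $\rho\neq1$, i.e. $K\neq0$. Thus $\tilde\nu$ is Lorentz harmonic if and only if $K$ is constant as well.

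Combining the two factors, $L$ is Lorentz harmonic iff both $\nu$ and $\tilde\nu$ are, and each of these holds iff $K$ is constant; the forward and backward implications therefore match, and the corollary follows. I expect the main obstacle to be purely expository rather than mathematical: one must justify carefully the product-harmonicity principle in the \emph{Lorentzian} (wave map) setting and with respect to the \emph{second} conformal structure, so that ``$k\tilde\nu$'' in the defining relation $\tilde\nu_{uv}=k\tilde\nu$ decouples correctly from the $\nu$ factor. Since $\mathbb{S}^2\times\mathbb{S}^2$ is a Riemannian product and the conformal class on $M$ used for both factors is the single second conformal structure determined by $\mathrm{II}$, the tension field genuinely splits, and no cross terms appear; the verification for $\tilde\nu$ reduces to the same determinant computation, with $a,b,\cos\phi$ entering identically, so no new phenomenon arises in the case $\rho=1$ either (there $\tilde U=-\theta_u e_3/2+A\xi_1$, but the normal-component computation still forces $\tilde\nu_{uv}\parallel\tilde\nu$, matching the flat case treated for $\nu$).
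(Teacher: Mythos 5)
Your proposal is correct and follows essentially the same route as the paper: the corollary is obtained there as an immediate rephrasing of the Ruh--Vilms theorem via the identification of $L$ with the pair $(\mathrm{Ad}_G e_3,\mathrm{Ad}_F e_3)$ in $\mathbb{S}^2\times\mathbb{S}^2$ and the componentwise splitting of the tension field for a product target. The paper leaves the second factor $\mathrm{Ad}_G e_3$ unexamined, whereas you verify it explicitly via $\beta$ (with the swap $\rho\pm1\mapsto\rho\mp1$ and the same determinant $\pm ab(\rho^2-1)\sin^2\phi$); this is a welcome completion of the detail, not a different argument.
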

The Legendrian Gauss map has the formula $\mathcal{L}=(f,n)=(GF^{-1},Ge_{3}F^{-1})$.
\begin{remark}
{\rm For an oriented minimal surface $f:M\to \mathbb{S}^3$ with unit normal $n$.
Then its Lagrangian Gauss map $L=f\wedge n$ is a \textit{harmonic map} with respect to the 
conformal structure determined by the \textit{first fundamental form}. Hence 
$L$ is a minimal Lagrangian surface in 
the Grassmannian \cite[Proposition 3.1]{Palmer}, see also \cite{CU}.
}
\end{remark}
\subsection{The loop group formulation for constant curvature surfaces}
Let $\alpha$ and $\beta$ be as defined above at (\ref{alphaexpression}) and (\ref{betaexpression}).
Let us now define the family of $1$-forms 
\bdm
\alpha^\lambda = \alpha_0 + \lambda \alpha_1 + \lambda^{-1} \alpha_{-1},
\edm
where $\lambda$ is a complex parameter.
The integrability conditions for the $1$-forms $\alpha$ and $\beta$ are 
$\dd \alpha + \alpha \wedge \alpha=0$
and $\dd \beta + \beta \wedge \beta=0$. 
Using these two equations, which must already be satisfied, 
it is fairly straightforward to deduce that $\alpha^\lambda$ is 
integrable for all $\lambda$ if and only if $\rho$ is \textit{constant}, 
in other words, if and only if the immersion $f$ has \emph{constant} curvature $1-\rho^2$.
In this case we have, of course, $\alpha = \alpha^1$, but we also have
\beq  \label{betaeqn}
\beta = \alpha^\mu, \quad \textup{where} \quad \mu = \frac{\rho+1}{\rho-1}.
\eeq
From now on, we assume that $\rho$ is constant, so that $\dd \alpha^\lambda + \alpha^\lambda \wedge \alpha^\lambda = 0$ for all non-zero complex values of $\lambda$.
Let us choose coordinates for the ambient space such that
\beq \label{initialcond}
F(u_0,v_0) = f(u_0,v_0)=G(u_0,v_0)= I,
\eeq
 at some base point $(u_0,v_0)$.
We further assume that $M$ is simply connected.  Then we can integrate the equations
\bdm
\hat F^{-1} \dd \hat F = \alpha^\lambda, \quad \hat F(u_0,v_0) = I,
\edm
 to obtain a 
map $\hat F: M \to \mathcal{G} = \Lambda SL(2,\C)_{\sigma \tau}$. 
Here the \textit{twisted loop group} $\Lambda SL(2,\C)_{\sigma\tau}$ is the fixed point subgroup of
the free loop group $\Lambda SL(2,\C)$, by the involutions $\sigma$ and $\tau$, 
that are defined as follows:
$$
\sigma x(\lambda): = \textup{Ad}_{\diag(1,-1)}(x(-\lambda)),
\ \ 
\tau x (\lambda) := \overline{x((\bar \lambda))^t}^{-1}. 
$$ 
Elements of
$\mathcal{G}$ take values in $SU(2)$ for real values of $\lambda$.

By definition we have $F = \hat F  |_{\lambda=1}$, and moreover, from (\ref{betaeqn}) and
the initial condition (\ref{initialcond}), we also have $G = \hat F |_{\lambda=\mu}$.  
Thus $\hat F$ can be thought of as a lift of the coordinate frame $\mathcal{F} = (F,G)$, 
with the 
projections $(F,G) = \left( \hat F |_{\lambda=1} , \hat F |_{\lambda=\mu} \right)$ and
\beq  \label{projectionformula}
f = \hat F \big|_{\lambda=\mu} \, \hat F^{-1} \big|_{\lambda=1}.
\eeq
Thus we may call the map $\hat F$ the \emph{extended coordinate frame} for $f$.

 Let us now consider a general map into the twisted loop group
 $\mathcal{G}$ that has a similar Maurer-Cartan form to $\alpha^\lambda$:  
 first let $K$ be the diagonal subgroup of $SU(2)$ and 
  $\mathfrak{su}(2)= \mathfrak{k} + \mathfrak{p}$ be the symmetric space decomposition,
  induced by $\mathbb{S}^2=G/K=SU(2)/U(1)$,  of
  the Lie algebra, that is 
  \bdm
  \mathfrak{k} = \textup{span}(e_3), \quad \textup{and} \quad
  \mathfrak{p} = \textup{span}(e_1, e_2).
  \edm
  
 \begin{definition}  \label{admissibleframedef}
 Let $M$ be a simply connected subset of $\real^2$ with coordinates $(u,v)$.
 An \emph{admissible frame} is a smooth map $\hat F: M \to \mathcal{G}$, the Maurer-Cartan form of
 which has the Fourier expansion:
 \bdm
 \hat F^{-1} \dd \hat F = \alpha_0 + \lambda B_1 \, \dd u + \lambda^{-1} B_{-1} \, \dd v,
 \quad \quad \alpha_0 \in  \mathfrak{k} \otimes \Omega^1(M),
   \quad B_{\pm 1}(u,v) \in \mathfrak{p}.
 \edm
 The admissible frame is \emph{regular at a point $p$}, if  $B_1(p)$ and $B_{-1}(p)$ are linearly independent, and $\hat F$ is called \emph{regular} if it is regular at every point.
 \end{definition}
Note that the extended coordinate frame for a constant curvature $1-\rho^2$ immersion, defined above, is a regular admissible frame on $M$.  Conversely, we have the following:

\begin{lemma}  \label{converselemma}
Let $\hat F: M \to \mathcal{G}$ be a regular admissible frame.  
Let $\mu$ be any real number not equal to $1$ or $0$. 
Then the map $f: M \to \SSS^3 = SU(2)$, defined by the projection (\ref{projectionformula}),
is an immersion of constant curvature
\bdm
K_\mu = 1- \rho^2, \quad \textup{where} \quad \rho:= \frac{\mu+1}{\mu-1}.
\edm
The first and second fundamental form are given by
\beq \label{firstandsecondff2}
\begin{split}
\mathrm{I}=A^2\> \mathrm{d}u^2+2AB\cos \phi\>\mathrm{d}u\mathrm{d}v+B^2\> \mathrm{d}v^2,
\quad
\mathrm{II}=2{\rho}AB\sin \phi\> \mathrm{d}u\mathrm{d}v,\\
\textup{where} \quad A = (\mu-1) |B_1|, \quad  \quad B = (\mu^{-1}-1)|B_{-1}|,
\end{split}
\eeq
and $\phi$ is the angle between $B_{1}$ and $B_{-1}$.
\end{lemma}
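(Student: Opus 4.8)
The plan is to reverse the computation of Section 3: from the loop group data $\hat F$ we recover the two $SU(2)$-frames $F = \hat F|_{\lambda = 1}$ and $G = \hat F|_{\lambda = \mu}$ (both genuinely $SU(2)$-valued, since $1$ and $\mu$ are real and elements of $\mathcal{G}$ take values in $SU(2)$ for real $\lambda$), and then show that $f = GF^{-1}$, as in (\ref{projectionformula}), has exactly the fundamental forms recorded at (\ref{firstandsecondff2}). First I would compute the Maurer-Cartan form of $f$ directly. Differentiating $f = GF^{-1}$ gives
\[
f^{-1} \dd f = \Ad_F\!\left( G^{-1}\dd G - F^{-1}\dd F\right) = \Ad_F\!\left( (\mu - 1) B_1 \, \dd u + (\mu^{-1}-1) B_{-1}\, \dd v\right),
\]
using that $\hat F^{-1}\dd \hat F$ has the same $\alpha_0\in\mathfrak{k}\otimes\Omega^1(M)$ part at $\lambda = 1$ and $\lambda = \mu$, and differs only in the coefficients of $B_{\pm 1}$. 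Reading off $f^{-1}f_u = (\mu-1)\Ad_F B_1$ and $f^{-1}f_v = (\mu^{-1}-1)\Ad_F B_{-1}$, and using that $\Ad_F$ is an isometry of $\mathfrak{su}(2)$, the first fundamental form is immediate: it gives $A = (\mu-1)|B_1|$, $B = (\mu^{-1}-1)|B_{-1}|$, and cross term $AB\cos\phi$ with $\phi$ the angle between $B_1$ and $B_{-1}$. Since $\mu \neq 0, 1$ the scalars $\mu - 1$ and $\mu^{-1}-1$ are nonzero, so regularity of $\hat F$ (linear independence of $B_1, B_{-1}$) is precisely what makes $f_u, f_v$ linearly independent; hence $f$ is an immersion.

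Next I would identify the unit normal. Because $B_1, B_{-1} \in \mathfrak{p} = \textup{span}(e_1, e_2)$, the unit vector $\Ad_F e_3$ is orthogonal to both $f^{-1}f_u$ and $f^{-1}f_v$, so I take $f^{-1}n = \Ad_F e_3$ (equivalently $n = G e_3 F^{-1}$), matching (\ref{coordframedef}). I would then compute the second fundamental form from $\mathrm{II}(f_i, f_j) = \langle f^{-1}n, f^{-1}f_{ij}\rangle$, exactly as in Section 3. The key simplification is the symmetric space bracket structure $[\mathfrak{k}, \mathfrak{p}]\subseteq\mathfrak{p}$, $[\mathfrak{p},\mathfrak{p}]\subseteq\mathfrak{k}$, together with $\alpha_0 \in \mathfrak{k}\otimes\Omega^1(M)$ and $B_{\pm 1}\in\mathfrak{p}$: differentiating $f^{-1}f_u$ and $f^{-1}f_v$ and projecting onto $\Ad_F e_3$, every contribution except the one coming from $[B_{-1}, B_1]$ lands in $\Ad_F(\real e_0 \oplus \mathfrak{p})$ and drops out. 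This yields $\mathrm{II}(f_u, f_u) = \mathrm{II}(f_v, f_v) = 0$ and $\mathrm{II}(f_u, f_v) = \rho AB\sin\phi$ with $\rho = (\mu+1)/(\mu-1)$; the arithmetic identity $(\mu+1)(\mu^{-1}-1) = (1-\mu^2)/\mu$ is what forces the factor $\rho$ to appear.

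Finally, the Gauss curvature follows from the standard Gauss equation for a surface in the curvature-$1$ space form $\SSS^3$, namely $K = 1 + \det\mathrm{II}/\det\mathrm{I}$. With $\det\mathrm{I} = A^2B^2\sin^2\phi$ and $\det\mathrm{II} = -\rho^2 A^2 B^2\sin^2\phi$, this gives $K = 1 - \rho^2$, manifestly constant and independent of the point, so no further integrability argument is needed. I expect the main obstacle to be the second fundamental form computation: one must carefully handle the mixed partial $f^{-1}f_{uv}$, in which both the genuine derivative term $\partial_v(f^{-1}f_u)$ and the quadratic term $(f^{-1}f_v)(f^{-1}f_u)$ contribute to the $e_3$-direction, and keep careful track of signs in $[B_1, B_{-1}] = 2|B_1||B_{-1}|\sin\phi\, e_3$ (which also fixes the orientation convention for $\phi$ and the sign of $n$). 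Everything else is bookkeeping that mirrors the forward computation of Section 3.
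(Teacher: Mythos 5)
Your proposal is correct and follows essentially the same route as the paper's proof: compute $f^{-1}\dd f = \Ad_F\bigl((\mu-1)B_1\,\dd u + (\mu^{-1}-1)B_{-1}\,\dd v\bigr)$, take $f^{-1}n = \Ad_F e_3$ using $B_{\pm 1}\in\mathfrak{p}$, and read off $\mathrm{I}$ and $\mathrm{II}$ by differentiating, with regularity of the frame giving the immersion property. The only (harmless) difference is that you make the final step $K = 1 + \det\mathrm{II}/\det\mathrm{I} = 1-\rho^2$ explicit, which the paper leaves implicit.
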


 \begin{proof}
 Set 
 \bdm 
 F:= \hat F |_{\lambda=1}, \quad  G:=\hat F  |_{\lambda=\mu},
 \edm
  so that $f= GF^{-1}$.
 Differentiating this formula and using the expressions 
 $F^{-1} \dd F = \alpha_0 + B_1 \, \dd u +  B_{-1} \, \dd v$ and
 $G^{-1} \dd G = \alpha_0 + \mu B_1 \, \dd u + \mu^{-1} B_{-1} \, \dd v$, we obtain
 \beq \label{fufvderivatives}
 f^{-1} f_u = (\mu-1) \Ad_F B_1 , \quad \quad f^{-1} f_v = (\mu^{-1}-1)\Ad_F B_{-1}.
 \eeq
 Thus, since $B_{\pm 1}$ are linearly independent for a regular admissible frame, 
 the map $f$ is an immersion and the first fundamental form is given by:
 \bdm
 (\mu-1)^2 |B_1|^2 \dd u^2 + 2 (\mu-1)(\mu^{-1}-1) \cos (\phi) |B_1||B_{-1}| \dd u\, \dd v
  + (\mu^{-1}-1)^2 |B_{-1}|^2 \dd v^2,
\edm
 where $\phi$ is the angle between $B_{1}$ and $B_{-1}$.  This gives the formula at
 (\ref{firstandsecondff2}) for the first fundamental form. 
 
 It remains to show the formula at
 (\ref{firstandsecondff2}) for the \emph{second} fundamental form, from which it will follow that the
 intrinsic curvature is $1-\rho^2$.  Since $B_{\pm 1}$ take values in $\mathfrak{p}$,
 and $e_3$ is perpendicular to $\mathfrak{p}$,
  it follows from the equations at (\ref{fufvderivatives}), that a choice of unit normal is given by 
\bdm
n = f \Ad_F e_3.
\edm
  Differentiating the equations (\ref{fufvderivatives}) then leads to 
 \beqas
 \left< f^{-1} f_{uu} , f^{-1} n \right> &=& \left< f^{-1} f_{vv} , f^{-1} n \right> = 0,\\
 \left< f^{-1} f_{uv} , f^{-1} n \right> &=& (1-\mu)(1+\mu^{-1}) |B_1||B_{-1}|  \sin \phi \\
  &=& \rho \,(\mu-1)(\mu^{-1}-1) |B_1||B_{-1}| \sin \phi,
 \eeqas
 which gives the formula at (\ref{fufvderivatives}) for $\mathrm{II}$.
 
 \end{proof}
 
 Note that
\beqas
\textup{for } \mu<0:   & \quad K_\mu \in (0,1];  \quad & \quad K_{-1}=1;\\
\textup{for } \mu> 0:&  \quad K_\mu <0; \quad & \quad \lim_{\mu \to 1} K_\mu =-\infty.
\eeqas

The Legendrian Gauss map and Lagrangian Gauss map of 
$f=\hat{F}_{\lambda=\mu}\hat{F}_{\lambda=1}^{-1}$
are given by
$$
\mathcal{L}=(\hat{F}_{\lambda=\mu}\hat{F}_{\lambda=1}^{-1}, 
\hat{F}_{\lambda=\mu}\>e_{3}\>\hat{F}_{\lambda=1}^{-1}),
\ \
L=(\mathrm{Ad}_{\hat{F}_{\lambda=\mu}}e_3,\mathrm{Ad}_{\hat{F}_{\lambda=1}}e_3),
$$
respectively.

\subsection{The generalized d'Alembert representation}
As we have shown,
the problem of finding a non-flat constant curvature immersion 
$f:M \to \mathbb{S}^3$ with $K<1$ is 
equivalent to finding an admissible frame. 
As a matter of fact, Definition \ref{admissibleframedef} of an admissible frame is  identical to the extended
$SU(2)$ frame for a pseudospherical surface in the Euclidean space $\mathbb{E}^3$ 
(see, for example, \cite{BS1, DoSter, todaagag}). The surfaces in ${\mathbb E}^3$ are obtained from the same frame, not
by the projection (\ref{projectionformula}), but by the so-called \emph{Sym formula}.
 We will explain the connection between these problems in the next section, but 
the  point we are making here is that the problem of constructing these admissible frames
by the generalized d'Alembert representation has already been solved in \cite{todaagag}.

A presentation of the method, using similar definitions to those found here,
can be found in \cite{BS1}. The basic data used to construct any admissible frame is:
\begin{definition}[\cite{BS1}, Definition 5.1]
Let $I_u$ and $I_v$ be two real intervals, 
with coordinates $u$ and $v$, respectively. 
A potential pair $(\eta_{+},\eta_{-})$ 
is a pair of smooth $\Lambda \mathfrak{sl}(2,\C)_{\sigma\tau}$-valued 
$1$-forms on $I_u$ and $I_v$ respectively with Fourier expansions in $\Lambda$ 
as follows{\rm:}
$$
\eta_{+}=\sum_{j=-\infty}^{1}(\eta_{+})_{j}\lambda^{j}\>\mathrm{d}u,
\ \
\eta_{-}=\sum_{j=-1}^{\infty}(\eta_{-})_{j}\lambda^{j}\>\mathrm{d}v.
$$
The potential pair is called {\em regular} if 
$[(\eta_{+})_1]_{12}\not=0$ and 
$[(\eta_{-})_{-1}]_{12}\not=0$.
\end{definition}
The admissible frame $\hat F$ is then obtained by solving $F_\pm ^{-1} \dd F_\pm = \eta_\pm$, with 
initial conditions $F_\pm(0) = I$, thereafter performing, at each $(u,v)$, a 
\textit{Birkhoff decomposition} \cite{LoopGroup}  
$$
F_+^{-1}(u) F_-(v) = H_-(u,v) H_+(u,v), \quad \textup{with} \quad H_\pm(u,v) \in \Lambda ^\pm SL(2,\C),
$$
and then setting $\hat F(u,v) = F_+(u) H_-(u,v)$.

Example solutions, using a numerical implementation of this method, are computed below.

\section{Limiting cases: pseudospherical surfaces in Euclidean space and flat surfaces in the $3$-sphere}  \label{limitingsection}
In this section we discuss the interpretation of admissible frames at degenerate values of the
loop parameter $\mu$, namely the case $\mu =1$, which was excluded from the above construction,
and the limit $\mu \to 0$ or $\mu \to \infty$.

\subsection{Relation to pseudospherical surfaces in Euclidean space ${\mathbb E}^3$.} 
As alluded to above, in addition to the constant Gauss curvature $K=1-\rho^2$ surfaces in 
$\SSS^3$ of Lemma \ref{converselemma}, one
also obtains, from a regular admissible frame $\hat F$, a constant negative curvature $-1$ surface
in $\mathbb{E}^3$ by the \emph{Sym formula}:
\beq  \label{symformula}
 \check f = 2 \frac{\partial \hat F}{\partial \lambda} \hat F^{-1} \big|_{\lambda=1}.  
\eeq
Here we explain how this formula arises naturally from the construction of surfaces in $\SSS^3= SU(2)$.
 
Obviously the projection formula 
\bdm
f_\mu = \hat F |_{\lambda=\mu} \, \hat F^{-1} |_{\lambda=1},
\edm for the surface in $\SSS^3$,
 degenerates to a constant map for $\mu = 1$.
On the other hand, we can see that 
$K_\mu = 1-(\mu+1)^2/(\mu-1)^2$ approaches $-\infty$ when $\mu$ approaches $1$.  This suggests that 
we multiply  our projection
formula by some factor, allowing the size of the sphere to vary,
 such that $K$ approaches some finite limit instead, in order to have an interpretation for the map at $\mu=1$.  Set
\beqas  
\check f_\mu & = & \frac{2}{1-\mu} (f_\mu -e_0)\\
& = & \frac{2} {1-\mu} \, ( \hat F \big|_{\lambda=\mu} \, \hat F^{-1} \big|_{\lambda = 1}-e_0). 
\eeqas
Note that $e_0=(1,0,0,0)$ under our identification $\mathbb{E}^4 = \mathfrak{su}(2) + \textup{span}(e_0)$.
Now, for $\mu \neq 1$, the function $f_\mu$ is a constant curvature $K_\mu$ surface
in $\SSS^3$, and $\check f_\mu$ is obtained by a constant dilation  of $\mathbb{E}^4$ by the 
factor $2(1-\mu)^{-1}$, plus a constant translation which has no geometric significance. 
It follows  that $\check f_\mu$ is a surface in a (translated) sphere of radius
$2(1-\mu)^{-1}$, and $\check f_\mu$ has constant curvature
\beqa
\check K_\mu &=& (1/4)(1-\mu)^2 K_\mu  \label{Khat} \\
  &=&  -\mu.   \nonumber
\eeqa

Now consider the function $g:  M \times (1-\varepsilon, 1+\varepsilon)  \to {\mathbb E}^4$, for some small
positive real number $\varepsilon$, given by
\bdm
g(u,v,\lambda) = 2 (\hat F(u,v)\big|_{\lambda=\lambda} \hat F(u,v)^{-1} \big|_{\lambda =1} -e_0).
\edm
This function is differentiable in all arguments, and 
\beqas
\left. \frac{ \partial g}{\partial \lambda} \right |_{\lambda=1} &=&  2
\lim_{\mu \to 1}  \frac{F \big|_{\lambda=\mu} F^{-1}\big|_{\lambda=1} - e_0}{1- \mu } \\
 &=& \lim_{\mu \to 1} \check f_\mu
\eeqas
Hence the limit on the right hand side exists and is a smooth function $M \to \mathbb{E}^4$.
On the other hand, differentiating the definition of $g$, we obtain the right hand side of
the Sym formula
(\ref{symformula}).
Note that, since $F$ is $SU(2)$-valued, this expression takes values in the Lie algebra,
$\mathfrak{su}(2) = \textup{span} (e_1, e_2, e_2)$,
 which, in our representation of $\mathbb{E}^4$, is the hyperplane
$x_0 = 0$.   In other words, $\lim_{\mu \to 1} \check f_\mu$ takes values in $\mathbb{E}^3 \subset \mathbb{E}^4$.  
Assuming that our surface in $\SSS^3$ is regular, then
one can verify that the regularity assumption on the frame $F$ implies that this map
is an immersion, and it is clear from the expression (\ref{Khat}) that this surface
has constant curvature $-1$.  

\begin{example}  \label{example1}
\begin{figure}[ht]
\centering
$
\begin{array}{cccc}
\includegraphics[height=40mm]{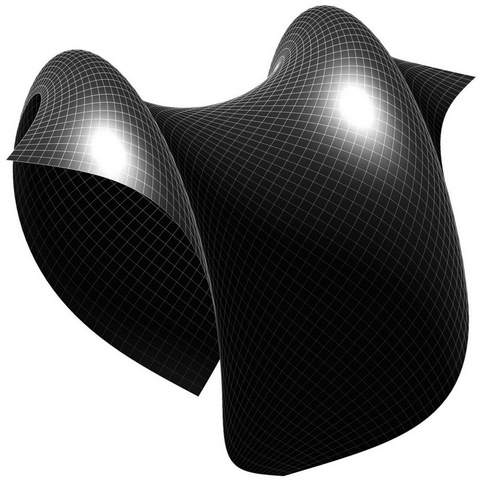}  \quad \quad & \quad \quad
\includegraphics[height=40mm]{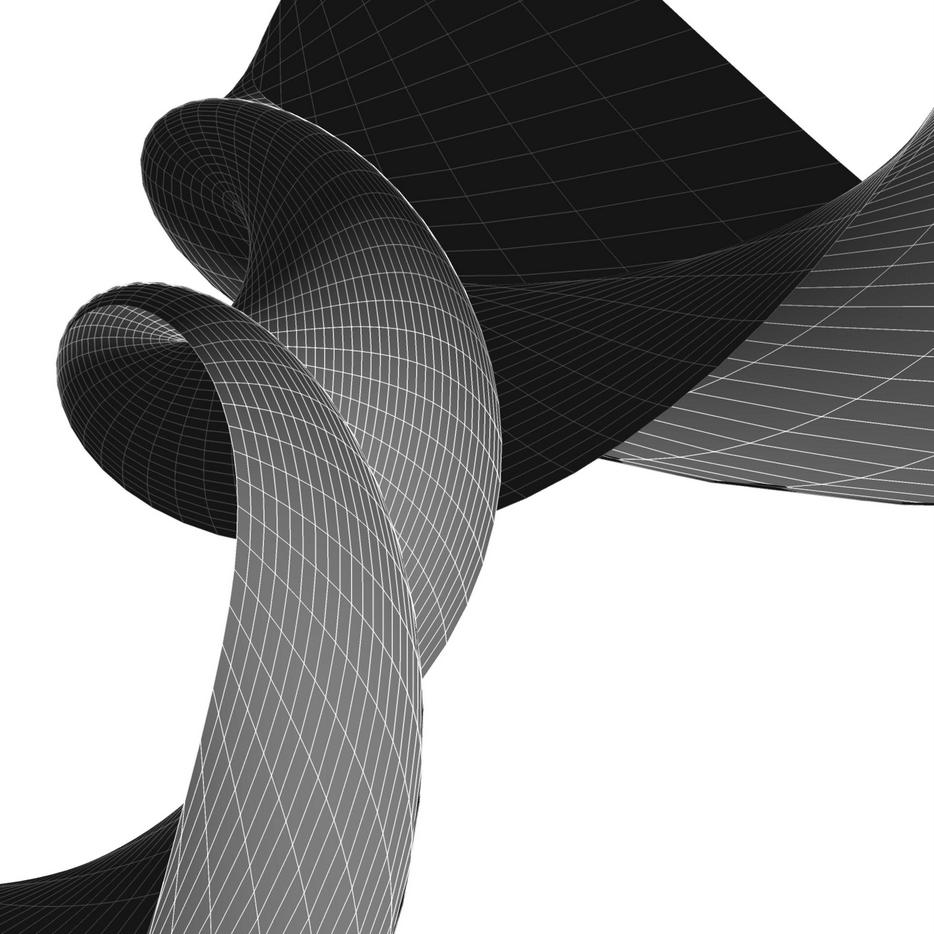}
 \vspace{1ex} \\
\mu=1, ~ K=-1, & 
 \mu=4, ~K=-\frac{16}{9},  \\
 \textup{target } {\mathbb E}^3. & \textup{target } \SSS^3.  \vspace{1ex} \\
\includegraphics[height=40mm]{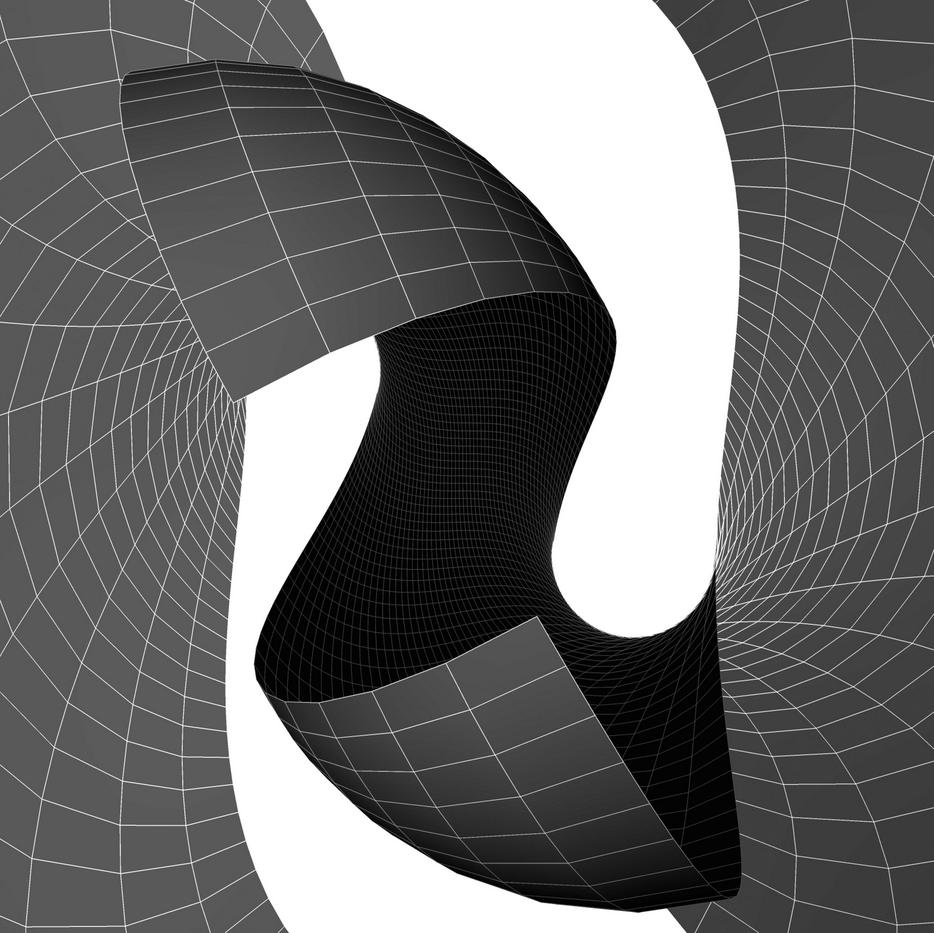}  \quad \quad & \quad \quad 
\includegraphics[height=40mm]{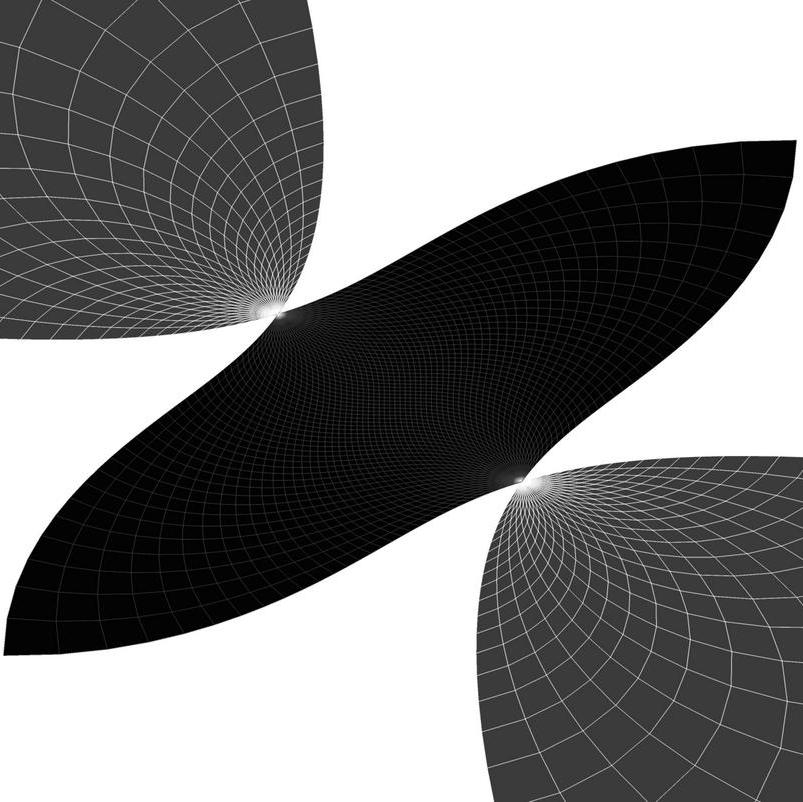}   \vspace{1ex} \\
 \mu=-4,~ K=\frac{16}{25},  & 
 \mu=-1,~ K=1, \\
 \textup{target } \SSS^3.&
 \textup{target } \SSS^3.
\end{array}
$
\caption{Surfaces obtained from one admissible frame evaluated at different values of $\mu$. All images are of the same coordinate patch. 
The first image is obtained via the Sym formula, and the others are in 
$\SSS^3$, stereographically projected to $\real^3$ for plotting. } 
\label{figure1}
\end{figure}
In Figure \ref{figure1}, 
various different projections of the same admissible frame are plotted.  
These are computed using the generalized d'Alembert method (see
\cite{todaagag}), using the potential pair
\bdm
\eta_+ = A \dd u, \quad \eta_- = A \dd v, \quad A = \bbar{cc} 0 & -\lambda^{-1} + i \lambda \\ \lambda^{-1} + i \lambda & 0 \ebar.
\edm 
The first image, the surface in $\mathbb{E}^3$ obtained via the Sym formula (\ref{symformula}),
is part of a hyperbolic surface of revolution (a plot of a larger region is shown in \ref{figureintro2}).
  The two cuspidal edges that can be seen in this image also appear in 
  the other surfaces at the same places in the coordinate domain, 
  because the condition on the admissible frame for the surface to be regular is independent of $\mu$.
The surfaces in $\SSS^3$ are of course distorted by the stereographic projection, 
which is taken from the south pole $(-1,0,0,0) \in \mathbb{E}^4$: the north pole,
$(1,0,0,0)$ is 
at the center of the coordinate domain plotted.  The last image is in fact planar, 
the projection of a part of a totally geodesic hypersphere $\SSS^2 \subset \SSS^3$.  
In this case, each of the two singular curves in the coordinate domain maps to a single point in 
the surface.
\end{example}

\begin{example}  \label{example2}
Amsler's surface in $\mathbb{E}^3$ can be computed by the generalized d'Alembert method 
using the potential pair:
\bdm
\eta_+ = \bbar{cc} 0 & i \lambda \\ i \lambda & 0 \ebar \dd u,
 \quad \eta_- = \bbar{cc} 0 & -\lambda^{-1} \\ \lambda^{-1} & 0 \ebar \dd v.
\edm 
The image of a rectangle $[0, a] \times [0,b]$ in the positive quadrant of 
the $uv$-plane is plotted in Figure \ref{figureA}, evaluated at 3 different values of $\mu$.  
The coordinate axes correspond to straight lines for the surface in $\mathbb{E}^3$, and 
to great circles for the surfaces in $\SSS^3$, which project to straight lines 
 under the stereographic projection from the south pole.  The  north pole $(1,0,0,0)$ 
corresponds to $(u,v)=(0,0)$.

The singular set in the coordinate patch corresponds to a cuspidal edge in each of the first two images, but contains a swallowtail singularity in the third.
See also Figure \ref{figureintro1}.
\begin{figure}[ht]
\centering
$
\begin{array}{ccc}
\includegraphics[height=34mm]{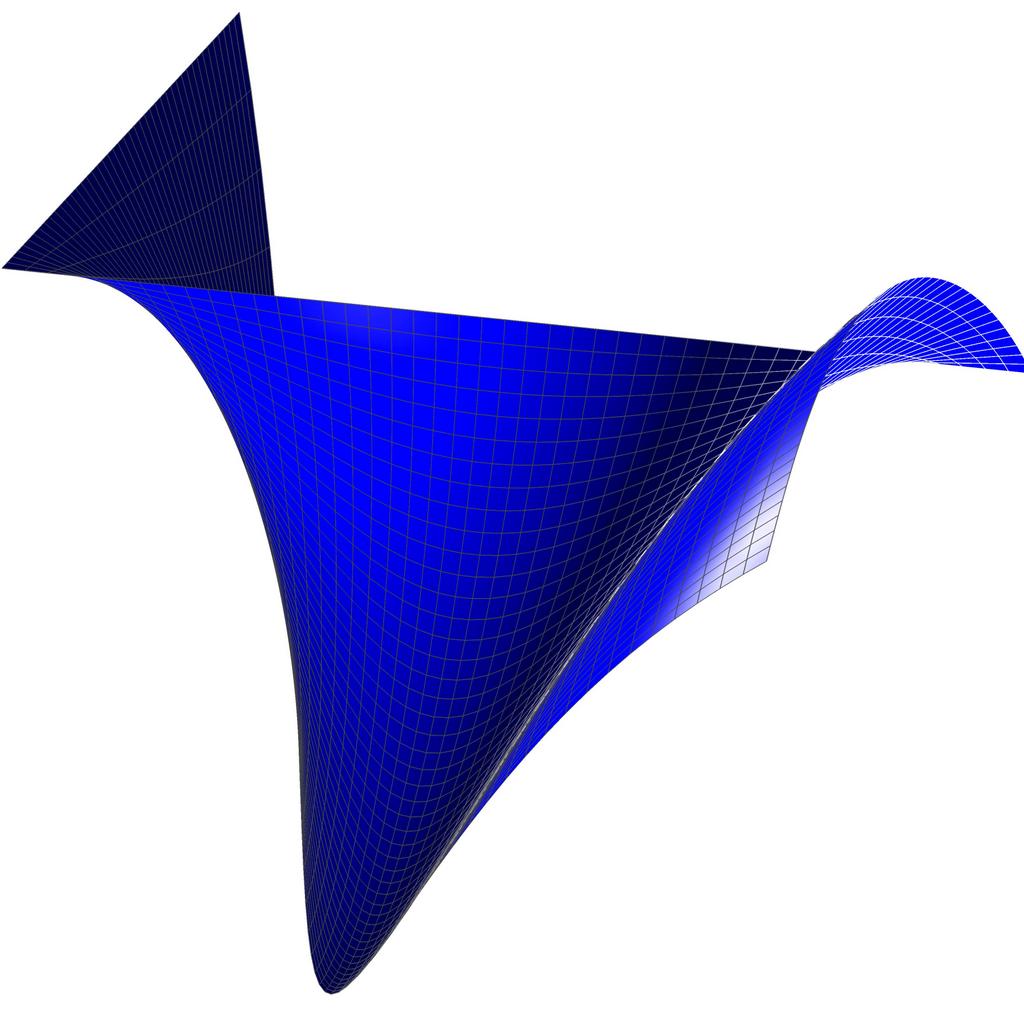}  \quad & \quad
\includegraphics[height=34mm]{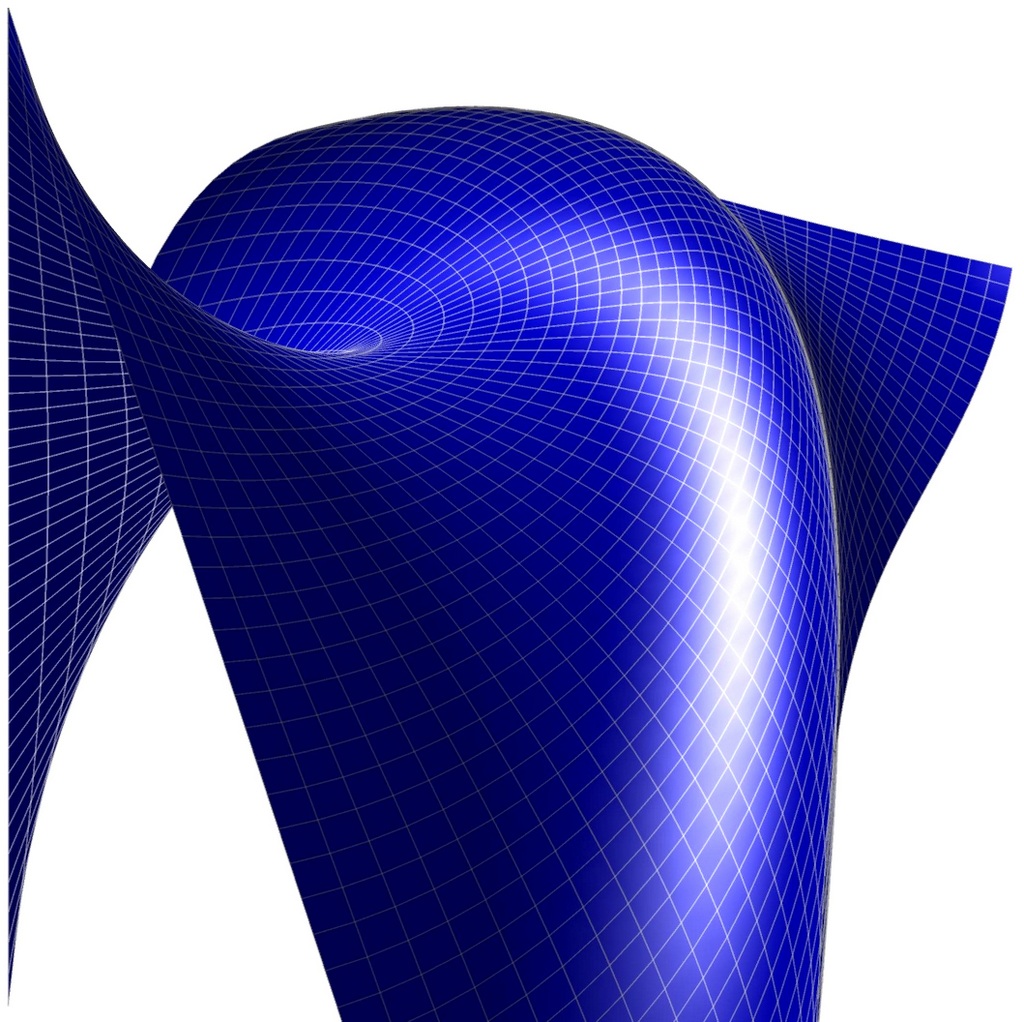}   \quad & \quad
\includegraphics[height=34mm]{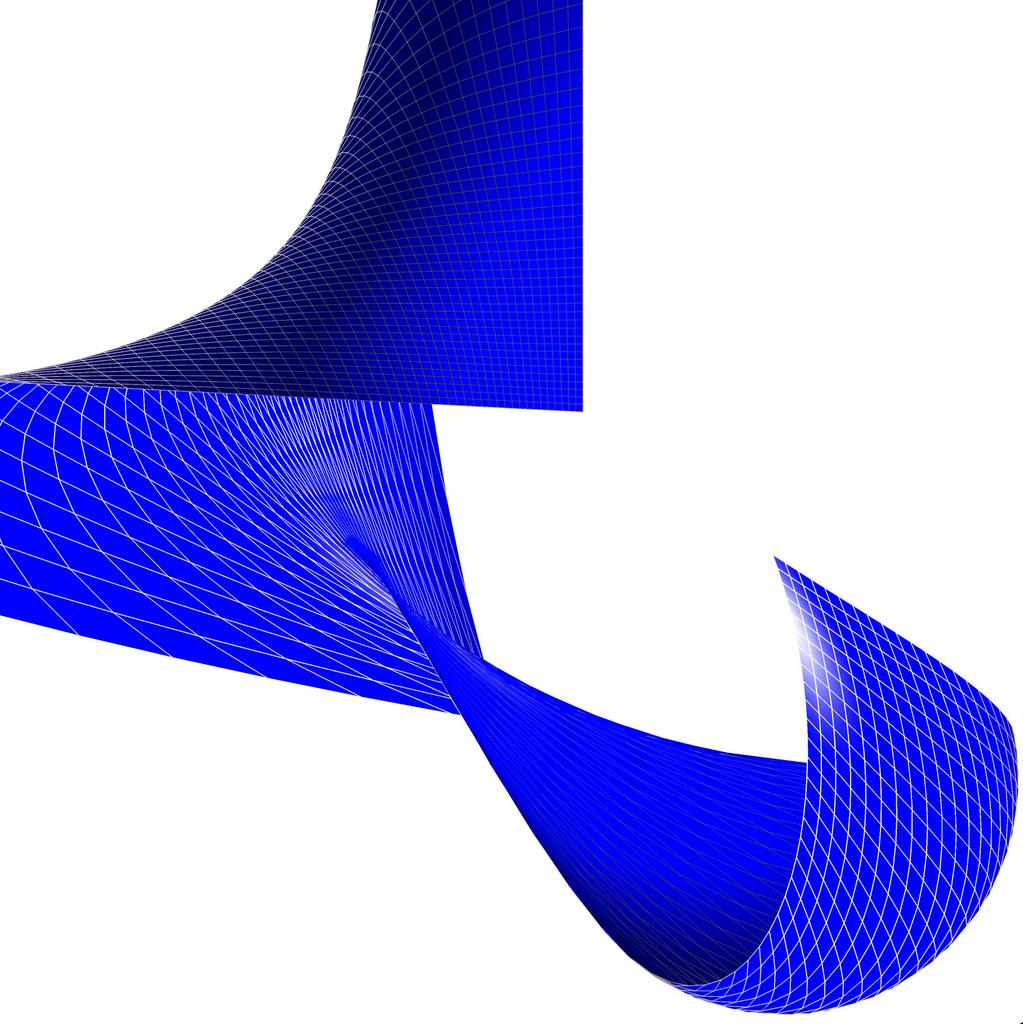}  
 \vspace{2ex} \\
\mu=1, ~ K=-1, & 
 \mu=4, ~K=-\frac{16}{9},  &  \mu=-4,~ K=\frac{16}{25},  \\
 \textup{target } {\mathbb E}^3. & \textup{target } \SSS^3. & \textup{target } \SSS^3.
\end{array}
$
\caption{Amsler's surface and generalizations in the $3$-sphere. The surfaces are obtained from one admissible frame evaluated at different values of $\mu$. All images are of the same coordinate patch.  } 
\label{figureA}
\end{figure}
\end{example}
\subsection{Relation to flat surfaces in the $3$-sphere}
We have considered above the surfaces $f_\mu$, obtained   by the projection 
\beq  \label{projformula3}
\hat F |_{\lambda=\mu} \, \hat F^{-1} |_{\lambda=1},
\eeq
 for all non-zero real values of $\mu$. We now consider the limit as $\mu$ approaches $0$ or $\infty$.
From the formula $K_\mu = 1-(\mu+1)^2/(\mu-1)^2$, it is clear that the limiting surface, if it exists, will be flat.  We discuss the case $\mu \to 0$ here.

Observe that the admissible frame $\hat F$ has a pole at $\lambda =0$, so we cannot
evaluate (\ref{projformula3}) at $\mu=0$.  However, in the Maurer-Cartan form of $\hat F$, the factor $\lambda^{-1}$ appears only as a coefficient of $\dd v$.  Hence a change of coordinates could remove the pole in $\lambda$. For $\mu >0$, we set
$\tilde u = u$ and $\tilde v = v/\mu$, so that
\bdm
 f_\mu ( u,  v) = f_\mu(\tilde u, \mu \tilde v) =: g_\mu(\tilde u, \tilde v).
\edm
For simplicity, let us assume that $M$ is a rectangle $(a,b) \times (c,d) \subset \real^2$, containing the origin $(0,0)$ and with coordinates $(u,v)$.  We denote by
$M_\mu$ the same rectangle in the coordinates $(\tilde u, \tilde v)$, that is
$M_\mu = (a,b) \times (c/\mu, d/\mu)$, and we define $M_0 := (a,b) \times (-\infty, \infty)$. 
 
We have already seen that, for $\mu>0$,
 the map $g_\mu: M_\mu \to \SSS^3$ is an immersion
of constant curvature $K_\mu = 1-(\mu+1)^2/(\mu-1)^2$, since this  is just the same map as 
$f_\mu$ in different coordinates.  For fixed $\mu_0 \in (0,1)$, 
if $0<\mu<\mu_0$ then $M_{\mu} \supset M_{\mu_0}$, and so we can restrict $g_\mu$ to $M_{\mu_0}$ and talk about a family of maps $g_\mu: M_{\mu_0} \to \SSS^3$ with
a fixed domain. 
\begin{figure}[ht]
\centering
\includegraphics[height=50mm]{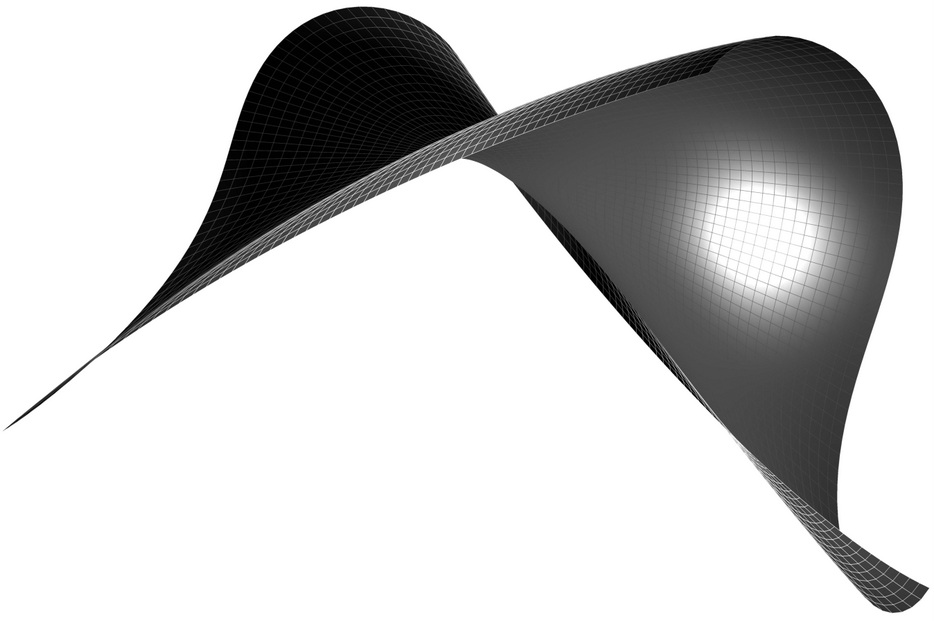}  
\caption{The surface $g_\mu$, for $\mu= 10^{-9}$, obtained from the same admissible frame used in Figure \ref{figure1}. } 
\label{figure2}
\end{figure}
%
%
%
\begin{lemma}
For any fixed $\mu_0 \in (0,1)$, the family of maps $g_\mu: M_{\mu_0} \to \SSS^3$ extends real analytically in $\mu$ to $\mu =0$.  Moreover, the map $g_0: M_{\mu_0} \to \SSS^3$ extends to the whole of $M_0 = (a,b) \times (-\infty,\infty)$, and is an
immersion of zero Gaussian curvature.
\end{lemma}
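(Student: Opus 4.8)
The plan is to remove the pole of $\hat F$ at $\lambda=0$ by exploiting the fact that $\lambda^{-1}$ occurs only with $\dd v$, and then to recognise the limiting map as the integral of a connection whose coefficients are independent of the rescaled second coordinate. Write $\alpha_0 = P\,\dd u + Q\,\dd v$ with $P,Q\in\mathfrak{k}$, so that $\hat F^{-1}\dd\hat F = (P+\lambda B_1)\dd u + (Q+\lambda^{-1}B_{-1})\dd v$. Set $G_\mu(\tilde u,\tilde v):=\hat F|_{\lambda=\mu}(\tilde u,\mu\tilde v)$ and $H_\mu(\tilde u,\tilde v):=\hat F|_{\lambda=1}(\tilde u,\mu\tilde v)$, so that $g_\mu = G_\mu H_\mu^{-1}$. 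Substituting $\dd u = \dd\tilde u$, $\dd v = \mu\,\dd\tilde v$ and evaluating the coefficients at $(\tilde u,\mu\tilde v)$ gives
$$
G_\mu^{-1}\dd G_\mu = (P+\mu B_1)\big|_{(\tilde u,\mu\tilde v)}\dd\tilde u + (\mu Q + B_{-1})\big|_{(\tilde u,\mu\tilde v)}\dd\tilde v,
$$
$$
H_\mu^{-1}\dd H_\mu = (P+B_1)\big|_{(\tilde u,\mu\tilde v)}\dd\tilde u + \mu(Q+B_{-1})\big|_{(\tilde u,\mu\tilde v)}\dd\tilde v .
$$
The essential point is that the factor $\mu^{-1}$ has disappeared: both right-hand sides are affine in $\mu$ with coefficients given by the frame data evaluated at $(\tilde u,\mu\tilde v)$, so they depend real-analytically on $\mu$ in a full neighbourhood of $\mu=0$.

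By real-analytic dependence of solutions of linear ODEs on a parameter, the maps $G_\mu$ and $H_\mu$ therefore extend real-analytically in $\mu$ across $\mu=0$ on the fixed domain $M_{\mu_0}$, and hence so does $g_\mu = G_\mu H_\mu^{-1}$; this proves the first assertion. (Real-analyticity of the frame data in $(u,v)$, which is what upgrades smooth to \emph{real-analytic} dependence here, is automatic for frames produced by the generalized d'Alembert method, since Birkhoff factorisation of real-analytic data is real-analytic.) At $\mu=0$ both evaluation points collapse to $(\tilde u,0)$: the factor $H_0(\tilde u,\tilde v)=F_0(\tilde u):=\hat F|_{\lambda=1}(\tilde u,0)$ depends only on $\tilde u$, and $G_0$ solves
$$
G_0^{-1}\dd G_0 = \gamma_0 := P(\tilde u,0)\,\dd\tilde u + B_{-1}(\tilde u,0)\,\dd\tilde v .
$$

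For the extension to all of $M_0$, observe that the coefficients of $\gamma_0$ depend on $\tilde u$ only. Flatness of $\gamma_0$ follows from the Maurer-Cartan equation for $\hat F$: matching the coefficient of $\lambda^{-1}$ in $\dd(\hat F^{-1}\dd\hat F) + (\hat F^{-1}\dd\hat F)\wedge(\hat F^{-1}\dd\hat F)=0$ yields $\partial_u B_{-1} = -[P,B_{-1}]$, and evaluating at $v=0$ gives exactly $\partial_{\tilde u}B_{-1}(\tilde u,0) + [P(\tilde u,0),B_{-1}(\tilde u,0)]=0$, the integrability condition for $\gamma_0$. Since the coefficients are independent of $\tilde v$, the $\tilde v$-direction integrates a complete linear flow, so $G_0$, and hence $g_0 = G_0 F_0^{-1}$, integrates over all $\tilde v\in\real$ on the simply connected strip $M_0 = (a,b)\times(-\infty,\infty)$.

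Finally, using $f^{-1}\dd f = \Ad_F(\beta-\alpha)$ for $f=GF^{-1}$ with $\beta=\gamma_0$ and $\alpha = (P+B_1)(\tilde u,0)\,\dd\tilde u$, I compute
$$
g_0^{-1}\dd g_0 = \Ad_{F_0}\big(-B_1(\tilde u,0)\,\dd\tilde u + B_{-1}(\tilde u,0)\,\dd\tilde v\big).
$$
Since $B_1(\tilde u,0)$ and $B_{-1}(\tilde u,0)$ lie in $\mathfrak{p}$ and are linearly independent by regularity, and $\Ad_{F_0}$ is a linear isometry, $g_0$ is an immersion. Flatness I would then deduce by continuity: $g_\mu\to g_0$ in $C^\infty_{\mathrm{loc}}$ by the real-analytic dependence, each $g_\mu$ has constant Gaussian curvature $K_\mu = 1-(\mu+1)^2/(\mu-1)^2$, and $K_\mu\to 0$ as $\mu\to0$, so $g_0$ has Gaussian curvature $0$ (equivalently, its first fundamental form has $\tilde v$-independent coefficients, from which flatness can be checked directly). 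I expect the \emph{crux} to be the pole-cancellation/analytic-dependence step: verifying that the rescaling $\tilde v = v/\mu$ converts the $\mu^{-1}$-singular system into one depending regularly and real-analytically on $\mu$ at $\mu=0$. Once that is in place, the global extension rests only on the mild observation that the limiting connection is $\tilde v$-independent, and the immersion and flatness statements are immediate from regularity and continuity of the curvature.
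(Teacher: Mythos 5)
Your proof is correct and follows essentially the same route as the paper's: the same rescaling $\tilde v = v/\mu$, the same observation that the Maurer--Cartan forms of the two factors become affine in $\mu$ (hence extend analytically across $\mu=0$, with $\tilde v$-independent coefficients at $\mu=0$ so that integrability propagates to the whole strip), and the same resulting formula for $g_0^{-1}\dd g_0$. The only blemish is the parenthetical claim that flatness can be read off from the $\tilde v$-independence of the first fundamental form --- a metric whose coefficients depend on one variable only need not be flat --- but your primary argument via $K_\mu \to 0$ and continuity is sound and suffices.
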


\begin{proof}
Write $\hat G_\mu (\tilde u, \tilde v) = \hat F(\tilde u, \mu \tilde v) = \hat F(u,v)$, so that $\hat G_\mu : M_\mu \to \mathcal{G}$.   Then 
\bdm
g_\mu(\tilde u, \tilde v) = H_\mu (\tilde u, \tilde v) K_\mu^{-1}(\tilde u, \tilde v),
\quad \textup{where} \quad
H_\mu  := \hat G_\mu  \big |_{\lambda = \mu}, \quad
K_\mu := \hat G_\mu \big |_{\lambda=1}.
\edm
Since $\hat F$ is an admissible frame, we can write
\beqas
\hat F^{-1} \dd \hat F &=& (U_0 + \lambda U_1) \dd u + (V_0 + \lambda^{-1} V_1) \dd v \\
& =& (U_0(\tilde u, \mu \tilde v) + \lambda U_1(\tilde u, \mu \tilde v))
\dd \tilde u + (\mu V_0 (\tilde u, \mu \tilde v) +  \mu\lambda^{-1} V_1 (\tilde u, \mu \tilde v) )\dd \tilde v, \eeqas
 and thus
\bdm
H_\mu^{-1} \dd H_\mu = (U_0 + \mu U_1) \dd \tilde u + (\mu V_0 + V_1) \dd \tilde v,
\edm
so 
\bdm
H_0^{-1} \dd H_0 = U_0(\tilde u, 0)  \dd \tilde u +  V_1 (\tilde u,0) \dd \tilde v,
\edm
and 
\bdm
K_\mu^{-1} \dd K_\mu = (U_0 +  U_1) \dd \tilde u + (\mu V_0 +  \mu V_1) \dd \tilde v,
\edm
so
\bdm
K_0^{-1} \dd K_0 = (U_0(\tilde u, 0) +  U_1(\tilde u, 0)) \dd \tilde u.
\edm
Since $H_\mu$ and $K_\mu$ are both obviously real analytic in 
$\mu$ in a neighbourhood of $\mu=0$, so also is $g_\mu$. Finally the 1-forms 
$\gamma = H_0^{-1} \dd H_0$ and $\delta = K_0^{-1} \dd {K_0}$ are both integrable
on $M_{\mu_0} =(a,b) \times (c/\mu_0, d/\mu_0)$ for any fixed $\mu_0$. But, since
the coefficients of the $1$-forms are constant in $\tilde v$, this means that they
are in fact integrable on the whole of $(a,b) \times (-\infty,\infty)$. This implies the claim.
\end{proof}
 
Using the expressions $\gamma$ and $\delta$ above, we obtain the formula
\bdm
g_0^{-1} \dd g_0 = \Ad_{K_0}(-U_1(\tilde u,0) \dd \tilde u + V_1 (\tilde u, 0) \dd \tilde v),
\edm
from which we have the following expression for the first  fundamental form of $g_0$:
\bdm
I(\tilde u, \tilde v) = \left( |B_1|^2 \dd \tilde u ^2 -2 \cos(\phi) |B_1| |B_{-1}| \dd \tilde u \dd \tilde v + |B_{-1}|^2 \dd \tilde v ^2 \right) \, \Big|_{(\tilde u, 0)}.
\edm
Letting $\mu \to 0$ in the expression (\ref{firstandsecondff2}), we conclude that
the second fundamental form of $g_0$ is
\bdm
\mathrm{II} = 2|B_1(\tilde u,0)| |B_{-1}(\tilde u,0)| \sin (\phi(\tilde u,0)) \dd \tilde u \dd \tilde v.
\edm

\begin{example} 
In Figure \ref{figure2} 
is shown the surface $g_\mu$, for $\mu=10^{-9}$, obtained from the 
same admissible frame $\hat F_\mu$ used in Example \ref{example1}.  A square region in the
$(\tilde u, \tilde v)$-plane is plotted, approximately equal to the region $(a,b) \times (c/\mu, d/\mu)$ in the $uv$-plane, where the region plotted in Example \ref{example1} was
$(a,b) \times (c,d)$.  The region plotted here is actually slightly larger, in order to make
the singular set visible.  As $\mu$ approaches zero, the cuspidal edges, which, in the
non-flat surface, were something of the form $v=\pm u + \textup{constant}$, are now approaching curves of the form
$\tilde v= \textup{constant}$.
\end{example}


\end{document}